\DeclareMathOperator{\supp}{supp}
\DeclareMathOperator{\Char}{Char}
\DeclareMathOperator{\sing}{sing}
\DeclareMathOperator{\Span}{span}
\newtheorem{theorem}{Theorem}
\newtheorem{proposition}{Proposition}
\newtheorem{lemma}{Lemma}
\newtheorem{corollary}{Corollary}
\newtheorem{definition}{Definition}
\newtheorem{remark}{Remark}
\begin{document}
%
% \baselineskip 18pt
%%%%%%%%%%%%%%%%%%%%%%%%%%%%%%%%%%%%%%%%%%%%%%%%%%%%%%%%%%%%%%%%%%%%%%
%%%%%%%%%%%%%%%% Blackboard boldface common symbols  %%%%%%%%%%%%%%%%%
%%%%%%%%%%%%%%%%%%%%%%%%%%%%%%%%%%%%%%%%%%%%%%%%%%%%%%%%%%%%%%%%%%%%%%
\def\R {{\mathbb{R}}}
\def\N {{\mathbb{N}}}
\def\C {{\mathbb{C}}}
\def\Z {{\mathbb{Z}}}
%%%%%%%%%%%%%%%%%%%%%%%%%%%%%%%%%%%%%%%%%%%%%%%%%%%%%%%%%%%%%%%%%%%%%%
%%%%%%%%%%%%%%%%%% varsymbols ... for readability %%%%%%%%%%%%%%%%%%%%
%%%%%%%%%%%%%%%%%%%%%%%%%%%%%%%%%%%%%%%%%%%%%%%%%%%%%%%%%%%%%%%%%%%%%%
\def\phi{\varphi}
\def\epsilon{\varepsilon}
\def\ma{{\mathcal A}}
%%%%%%%%%%%%%%%%%%%%%%%%%%%%%%%%%%%%%%%%%%%%%%%%%%%%%%%%%%%%%%%%%%%%%%
%%%%%%%%%%%%%%%%%%%%%%%%%%%%%%%%%%%%%%%%%%%%%%%%%%%%%%%%%%%%%%%%%%%%%%
%
%%%%%%%%%%%%%%%%%%%%%%%%%%%%%%%%%%%%%%%%%%%%%%%%%%%%%%%%%%%%%%%%%%%%%%
%%%%%%%%%%%%%%%%%%%  math local macros %%%%%%%%%%%%%%%%%%%%%%%%%%%%%%
%%%%%%%%%%%%%%%%%%%%%%%%%%%%%%%%%%%%%%%%%%%%%%%%%%%%%%%%%%%%%%%%%%%%%%
\def\tb#1{\|\kern -1.2pt | #1 \|\kern -1.2pt |} 
\def\Qed{\qed\par\medskip\noindent}
%%%%%%%%%%%%%%%%%%%%%%%%%%%%%%%%%%%%%%%%%%%%%%%%%%%%%%%%%%%%%%%%%%%%%%
%%%%%%%%%%%%%%%%%%%%%%%%%%%%%%%%%%%%%%%%%%%%%%%%%%%%%%%%%%%%%%%%%%%%%%
%

\title[Lipschitz regularity of the minimum time function]{Partial Lipschitz regularity \\ of the minimum time function \\ for sub-Riemannian control systems} 
\author{Paolo Albano} 
\address{Dipartimento di Matematica, Universit\`a di Bologna, Piazza di Porta San Donato 5, 40127 Bologna, Italy} 
\email{paolo.albano@unibo.it}
 \author{Vincenzo Basco}
\address{Thales Alenia Space, via Saccomuro 24, Roma, Italy}
\email{vincenzo.basco@thalesaleniaspace.com,}
\email{vincenzobasco@gmail.com}
\author{Piermarco Cannarsa} 
\address{Dipartimento di Matematica, Universit\`a di Roma "Tor Vergata",  Roma, Italy} 
\email{cannarsa@mat.uniroma2.it}

\date{\today}

\begin{abstract}
  In Euclidean space of dimension 2 or 3, we study a minimum time problem associated with a system of  real-analytic  vector fields satisfying H\"ormander's bracket generating condition, where the target is a nonempty closed set.  We show that, in dimension 2, the minimum time function is locally Lipschitz continuous while, in dimension 3, it is Lipschitz continuous in the complement of a set of measure zero. In particular, in both cases, the minimum time function is a.e. differentiable on the complement of the target.  In dimension 3, in general, there is no hope to have the same regularity result as in dimension 2. Indeed, examples are known (see Remark 6.6 below) where the minimum time function fails to be locally Lipschitz continuous.
\end{abstract}

\subjclass[2020]{49N60, 49L12, 35D40, 53C17}

\keywords{Minimum time problem, nonsmooth target, characteristic points, Hormander's vector fields, Carnot-Caratheodory distance, sub-Riemannian geometry} 
%\keywords{Hamiltonian systems, Sub-Riemannian geometry, Singular trajectories}

\maketitle

\section{Introduction}\
Let $f_1,\ldots ,f_m$ be  a family of  real-analytic vector fields  defined on $\R^n$, $n\geq 2$. Set 
$$
\mathcal U:=\{ u:[0,+\infty [\longrightarrow \bar{B}^m_1(0)\, |\, u\text{ is Lebesgue measurable}\},
$$
    where $\bar{B}^m_1(0)$ stands for the closed unit ball of $\R^m$ with center at the origin.
    Define
\begin{equation}\label{eq:f} 
f(x)u=\sum_{i=1}^m f_i(x) u_i.
  \end{equation}

    We consider the state equation  
\begin{equation}\label{eq:s0}
y' (t)=f(y(t))u(t)\quad \text{for a.e. }t\geq 0,
\end{equation}
neither assuming that $f_1,\ldots ,f_m$ have sublinear growth nor that they are linearly independent\footnote{In particular, the number of vector fields $m$ may be arbitrarily large.}, and the time optimal control problem
\begin{equation}\label{controlproblem}
\text{minimize $\{t\geq 0\, | \, y_{x,u}(t)\in \mathcal K \}$ over all  controls }u\in \mathcal U,
\end{equation}
where $y_{x,u}$ denotes the solution of \eqref{eq:s0} associated with $u\in \mathcal U$ such that $y_{x,u}(0)=x\in \R^n$ and $\mathcal K \subset \R^n$ is a nonempety closed set.

We study the regularity of the \textit{minimum time function} 
$$
\mathcal{T}_{\mathcal K}: \R^{n} \longrightarrow [0,+\infty]
$$
under the controllability assumption 
$$\mathcal T_{\mathcal K} (x)<+\infty\quad \forall x\in \R^n.$$
 We are interested in the Lipschitz regularity of the minimum time function. It is well-known that, in general, one cannot expect   $\mathcal T_{\mathcal K}$ to be locally Lipschitz continuous. Indeed, there can be some {\textquotedblleft}singular" time-optimal trajectories implying  the failure of the Lipschitz continuity of $\mathcal T_{\mathcal K}$. In the present context, this can be proved using \cite[Proposition 5.3-(ii)]{BCF} which implies that, along a singular time-optimal trajectory,  the horizontal proximal gradient of $\mathcal T_{\mathcal K}$ contains a  nonzero vector.  Indeed, a necessary condition for the Lipschitz continuity of  $\mathcal T_{\mathcal K}$ at a point $x$ is that the horizontal proximal gradient of $\mathcal T_{\mathcal K}$ at $x$ contains only the vector zero. 
So, without making additional assumptions---such as the absence of singular minimizing controls like in \cite{CR}---the best one can hope for is a sort of ``generic'' local Lipschitz regularity.

In this paper, we show that, if vector fields $f_1,\ldots ,f_m$ are real-analytic and satisfy H\"ormander's bracket generating condition (see \eqref{a:hor} below), then $\mathcal T_{\mathcal K}$ is locally Lipschitz continuous, if $n=2$. Furthermore, if $n=3$, $\mathcal T_{\mathcal K}$ is Lipschitz continuous in the complement of a set of measure zero. 
For $n>3$ the problem remains open. Another interesting open problem is to provide a better Hausdorff estimate for the set of points where $\mathcal T_{\mathcal K}$ fails to be Lipschitz continuous, for $n\geq 3$.

We observe that, since $\mathcal T_{\mathcal K}$ solves---in the viscosity sense---the eikonal equation
\begin{equation}\label{eq:hj}
\sum_{i=1}^m (\nabla v(x)\cdot {f_i(x)} )^2=1,\qquad  x\in \R^n\setminus \mathcal K,
\end{equation}
the above ``generic'' Lipschitz continuity  can be understood as a regularity result for the unique viscosity solution  of (\ref{eq:hj}) (cfr. \cite{B}) under suitable boundary conditions and, possibly, conditions at infinity.

In the case of a control system given by a family of smooth vector fields satisfying H\"ormander's bracket generating condi\-tion, a number of results is present in the mathematical literature assuming either that $\mathcal K$ is a point-wise target or the boundary of an open set. In particular, in \cite[Theorem 11.2]{ABB}, the case of a point-wise target is considered:
the authors show that, for every $y$, the minimum time function $\mathcal T_{\{y\}}$ is smooth on an open dense subset of any compact ball\footnote{The ball is understood with respect to the Carnot-Caratheodory metric defined below.}  centered at $y$. 

%We point out that our result cannot directly be obtained by the previous one. Indeed, even the generic local Lipschitz continuity of
% $\mathcal T_{\mathcal K} =\inf_{y\in \mathcal K} \mathcal T_y$ does not follow directly from the smoothness of $\mathcal T_y$ on an open dense subset %of $\R^n\setminus 
%\{y\}$, since the intersection of open sets needs not be open.

In \cite{N}, for a larger class of control systems and  a compact target satisfying a stronger geometrical assumption than ours, i.e., a uniform interior sphere condition, it is shown that the minimum time function is Lipschitz  on the complement of a set of measure zero.  

In \cite{A1}, a very special model with a smooth target is studied in detail.
In this case, the minimum time function turns out to be  
locally Lipschitz continuous since no singular time-optimal trajectories are present, 
due to the special form of the vector fields (see also \cite{A3} for a geometric explanation of this kind of phenomenon). Again in the case of a smooth target,  
a much more detailed analysis is developed in \cite{ACS1,ACS2}  (see also \cite{A3}). More precisely, in \cite{ACS1} it is shown that a time optimal trajectory is singular if and only if it hits the target at a characteristic point---i.e. a point where all vector fields $f_1,\ldots f_m$ are tangent to the target. Therefore,  $\mathcal T_{\mathcal K}$ is locally Lipschitz continuous and semiconcave in the complement of a set of measure zero. In \cite{ACS2}, it is shown that  $\mathcal T_{\mathcal K}$ is smooth outside a negligible set. 

In the present paper, we follow a similar---yet different---approach: we analyze the structure of the set of all  characteristic points on  the boundary of the \textit{reachable set} from the target at a given time $\tau>0$, i.e.,
\[\mathcal R(\tau )=\{ x\in \R^n\, |\, \mathcal T_{\mathcal K}(x)\le \tau \}.\]
We recall that, when all  data are smooth,   \cite[Th\'eor\`eme 1]{D} ensures that the set of all  characteristic points on a smooth manifold is of measure zero. On the other hand,
such a result is of no use in the present context because  the boundary of $\mathcal R(\tau )$ is nonsmooth. Furthermore, due to the fact that $\mathcal R(\tau )$ is a set with a nonsmooth boundary, its conormal bundle can be only defined using limiting normal cones.

We observe that any characteristic point of $\mathcal R(\tau )$ 
is the endpoint of a singular time-optimal trajectory. Then, using also the fact that we are working in low dimension $(n=2$ or $3)$, the real-analytic regularity of the vector fields imposes severe restrictions 
on the region which can be covered by singular time-optimal trajectories, showing that such a region is empty, if $n=2$, and at most negligible, if $n=3$.  
 
Since Petrov's condition is satisfied  away from characteristic points (see Theorem \ref{prop1}), we can locally reduce our problem to the  regularity of the minimum time function in the presence of a local Petrov condition.
By applying the local regularity result obtained in \cite{S},  
we conclude that the minimum time function is Lipschitz continuous in the complement of a set of measure zero (see Theorem \ref{t:at}). We point out that, since our analysis is local, we do not need restrictions of global nature neither on vector fields $f_1,\ldots ,f_m$ nor on the target.  

Time optimal control problems, in the absence of Petrov's condition, are also considered in \cite{CMN} for differential inclusions which include control systems as a special case. However the set-up of \cite{CMN} does not cover the case of H\"ormander's vector fields of interest to this paper.  

 The outline of this paper is as follows. In Section \ref{s:2}, we recall some basic facts and notations. In Section \ref{s:3}, we study the characteristic points of the boundary of the reachable set. 
 In Section \ref{s:4}, we relate them to the Lipschitz regularity of the minimum time function. In Section \ref{s:5}, we introduce singular time optimal trajectories and we show that 
any characteristic point can be associated with some singular trajectory.

Notice that, in the above sections,  real-analiticity of the vector fields is not assumed.  
On the other hand, in the last section, we show that the assumptions needed for the results of Section \ref{s:4} are automatically verified for real-analytic vector fields in low dimension.

\section{Preliminaries} \label{s:2}

We begin by recalling the definition of the proximal normal cone to a nonsmooth set. We will need this notion in our analysis of the boundary of the reachable set at  given time.  
 Let $C$ be a closed subset of $\R^n$ and $x\in C$. 
The \textit{proximal normal cone}  to $C$ at $x$ is the set defined by 
$$
 	N_{C}^{P}(x)=\{ p\in \R^n\, | \, \exists  \sigma_{x,p}\geq 0 :  p\cdot (y-x)\leq \sigma_{x,p} |y-x|^2,\, \forall y \in C\}.
$$	
\begin{remark}\label{r:i}	
We observe that the proximal normal is a convex and a conic set (i.e. if $p \in N_{C}^{P}(x)$  then $\lambda p \in N_{C}^{P}(x)$, for every  $\lambda \geq 0$).  
Furthermore, we remark that 
\begin{equation}\label{eq:dense} 
N_C^{P}(x)\setminus \{ 0\} \not=\emptyset ,
\end{equation}
for $x$ in a dense subset of $\partial C$. 
\end{remark}

Since we will need a local analysis of the minimum time function with the reachable set at a given time as target, in order to avoid misleading notations, let us consider the minimum time problem with a closed nonempty set $\mathcal R$ as target. We set 
$$
\theta (x,u)=\inf \{ t\geq 0\, | \, y_{x,u}(t)\in \mathcal R\}\in [0,+\infty ],
$$  
where $y_{x,u}$ is a solution of \eqref{eq:s0} . Furthermore, we define 
$$
y(x,u)=y_{x,u}(\theta (x,u)), \quad \text{ provided that }\theta (x,u)<+\infty. 
$$
Given $x\in \R^n\setminus \mathcal R$, we set
$$
{\mathcal F}(x)=\{ \bar x\in\partial {\mathcal R}\, |\, \exists u:\quad \theta (x,u)=T_{\mathcal R}(x)\text{ and }\bar x =y(x,u)\}. 
$$
Then, the following result is a direct consequence of  \cite[Corollary 3.3.]{S}
\begin{theorem}\label{carlo}
Let $f_1,\ldots ,f_m$ be locally $C^{1,1}$ and let $x_0$ be such that $\mathcal T_{\mathcal R}(x_0)<+\infty$. Assume that there exists an open set $A$ containing ${\mathcal F}(x_0)$ such that for every $C\Subset A$ there exists $\mu >0$ such that 
\begin{equation}\label{eq:ipoc}
\min_{u\in U} f(x)u\cdot \eta \le -\mu , 
\end{equation}
for every $x\in C\cap \partial \mathcal R$ and for every $\eta \in N_{\mathcal R}^P(x)\cap \mathbb S^{n-1}$. Then $\mathcal T_{\mathcal R}$ is Lipschitz continuous in a neighborhood of $x_0$. 
\end{theorem}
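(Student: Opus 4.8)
The plan is to verify the hypotheses of \cite[Corollary 3.3]{S} and then invoke it; the geometric content I need from that corollary is that a \emph{local} Petrov condition, holding near the endpoints of the time-optimal trajectories issuing from $x_0$, propagates to local Lipschitz regularity of $\mathcal T_{\mathcal R}$ at $x_0$ itself. First I would record that $\mathcal F(x_0)$ is compact. Since $f_1,\ldots ,f_m$ are locally $C^{1,1}$ and the controls take values in the bounded set $\bar{B}^m_1(0)$, a Gronwall estimate confines every trajectory issuing from $x_0$ on the interval $[0,\mathcal T_{\mathcal R}(x_0)]$ to a fixed compact set, so the endpoints $y(x_0,u)$ of optimal trajectories lie in a bounded subset of $\partial\mathcal R$. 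Closedness of $\mathcal F(x_0)$ follows from the standard compactness of optimal controls (weak-$*$ limits of measurable controls valued in $\bar{B}^m_1(0)$, uniform convergence of the associated trajectories, lower semicontinuity of $\theta$). Hence $\mathcal F(x_0)$ is a compact subset of the open set $A$, and I may choose a compact $C$ with $\mathcal F(x_0)\subset\Int C$ and $C\Subset A$; the hypothesis then furnishes the corresponding $\mu>0$.

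The core mechanism is the Petrov reaching estimate. For $x\in C\cap\partial\mathcal R$ and $\eta\in N_{\mathcal R}^P(x)\cap\mathbb S^{n-1}$, condition \eqref{eq:ipoc} provides a control value whose velocity $f(x)u$ has inner product at most $-\mu$ with the outer proximal normal. Since for an exterior point $z$ the gradient of $\dist(\cdot,\mathcal R)$ is the unit proximal normal at the projection of $z$, differentiating $t\mapsto\dist(y_{z,u}(t),\mathcal R)$ and using this description of its subgradient shows that the distance to $\mathcal R$ can be decreased at rate at least $\mu$, uniformly, as long as the trajectory stays near $C\cap\partial\mathcal R$. Integrating yields
\[
\mathcal T_{\mathcal R}(z)\le\frac{1}{\mu}\,\dist(z,\mathcal R)
\]
for all $z$ in a neighborhood of $\mathcal F(x_0)$.

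To transport this estimate to $x_0$, I would fix $x,x'$ near $x_0$, take $u$ optimal for $x$ with endpoint $\bar x=y(x,u)\in\mathcal F(x)$, and run the \emph{same} control from $x'$. As $f$ is locally Lipschitz, Gronwall gives $|y_{x',u}(t)-y_{x,u}(t)|\le e^{Lt}|x-x'|$ on $[0,\mathcal T_{\mathcal R}(x)]$, so at the optimal time $T_x=\mathcal T_{\mathcal R}(x)$ the point $z:=y_{x',u}(T_x)$ lies within $e^{LT_x}|x-x'|$ of $\bar x\in\partial\mathcal R$. For $x,x'$ close enough to $x_0$ the endpoint $\bar x$ lies in $\Int C$ and $z$ falls in the neighborhood where the reaching estimate holds, so the dynamic programming principle gives
\[
\mathcal T_{\mathcal R}(x')\le T_x+\mathcal T_{\mathcal R}(z)\le\mathcal T_{\mathcal R}(x)+\frac{e^{LT_x}}{\mu}\,|x-x'|.
\]
Exchanging the roles of $x$ and $x'$ produces the two-sided Lipschitz bound in a neighborhood of $x_0$.

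The main obstacle is the localization step: I must guarantee that the endpoints $\mathcal F(x)$ of optimal trajectories issuing from points $x$ near $x_0$ stay inside $\Int C$, that is, an upper semicontinuity property of the multifunction $x\mapsto\mathcal F(x)$ at $x_0$. This follows from compactness of optimal trajectories together with continuity of $\mathcal T_{\mathcal R}$, but it is precisely the technical content already packaged in \cite[Corollary 3.3]{S}; once it is in force, the estimates above close the argument and give the claimed local Lipschitz regularity.
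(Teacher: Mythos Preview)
Your approach is essentially the same as the paper's: the paper gives no proof at all, stating only that the result ``is a direct consequence of \cite[Corollary 3.3]{S}''. You likewise invoke that corollary, additionally sketching the mechanism behind it (Petrov reaching estimate near $\mathcal F(x_0)$, Gronwall to compare trajectories, dynamic programming to transport the bound back to $x_0$, upper semicontinuity of $x\mapsto\mathcal F(x)$ for the localization). This is more detail than the paper supplies and is consistent with how the cited result works; just be careful that continuity of $\mathcal T_{\mathcal R}$ is not assumed in the statement, so the upper semicontinuity of $\mathcal F$ should rest on the compactness argument for optimal trajectories rather than on continuity of the value function.
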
  
Here $\mathbb S^{n-1}\subset \R^n$ is the unit sphere of dimension $n-1$.

In order to state H\"ormander's bracket generating condition, we recall that given two vector fields $f$ and $g$, 	we denote by 
$$
[f,g](x):=Dg(x)(f(x))-Df(x)(g(x))
$$ 
the commutator of $f$ and $g$. Then, given a family of smooth vector fields $f_1,\ldots , f_m$,   we denote by 
$Lie_{\{f_1,\ldots ,f_m\}}$ the Lie algebra generated by the vector fields and by all their iterated commutators. Then, H\"ormander's bracket generating condition can be written as follows 
\begin{equation}\label{a:hor}
Lie_{\{f_1,\ldots ,f_m\}}(x)=\R^n,\qquad \forall x\in \R^n,
 \end{equation} 
where $Lie_{\{f_1,\ldots ,f_m\}}(x)=\{g(x)~|~g\in Lie_{\{f_1,\ldots ,f_m\}}\}$. 
We recall that H\"ormander's bracket generating condition  implies the controllability of the control system \eqref{eq:s0}, i.e. $\mathcal T_{\mathcal K}(x)<+\infty$, for every $x\in \R^n$.

Let us also recall a basic property of  the so-called Carnot-Caratheo\-do\-ry distance\footnote{We will need this property for a localization in the proof of Theorem \ref{t:at}.} (see \cite{NSW} for the definition). 
Assume that the vector fields are smooth and fulfill \eqref{a:hor}. 
 Then, for every $x,y\in \R^n$ the Carnot-Caratheodory distance satisfies 
 $$
 d(x,y)=\mathcal T_{ \{y\}} (x).
 $$ 
 We recall\footnote{See e.g. \cite{NSW}.} that for every compact subset $\mathcal H$ of $\R^n$,  there exist two positive constants $C_1,C_2$ and $\alpha \in ]0,1[$, such that 
 \begin{equation}\label{eq:cc0}
 C_1 |x-y|\le d(x,y)\le C_2 |x-y|^\alpha ,\qquad \forall x,y\in \mathcal H. 
 \end{equation}
 Furthermore, for every $R> 0$ and $x\in \R^n$, we define 
 \begin{equation}\label{eq:cb}
 B^C_R(x)=\{ y\in \R^n\ | \ d (x,y)<R\}.
 \end{equation}
 \begin{remark}\label{r:smco} 
 We observe that, under Condition \eqref{a:hor}, for every $x\in \R^n$ there exists $R_x>0$ such that 
 $$
 \bar B^C_R(x)\text{ is compact for every }R\in ]0,R_x[,  
 $$
 and, by the second inequality in \eqref{eq:cc0}, $\bar B^C_R(x)$ has nonempty interior. 
 (Here $\bar B$ stands for the closure of the set $B$.) 
 \end{remark}

 \section{Characteristic points and Petrov's condition}\label{s:3}

In what follows, we define the Hamiltonian by
$$
H (x,p):=\left (\sum_{i=1}^m f_i(x,p)^2\right )^{\frac 12},\qquad (x,p)\in T^*\R^n,$$
where we have set, with a slight abuse of notation, 
$$
f_i(x,p):=f_i(x)\cdot  p,\quad \forall (x,p)\in T^*\R^n ,\, i=1,\ldots ,m
$$
(note that $T^*\R^n$ can be identified with $\R^n\times \R^n$). 
Observe that 
\begin{equation}\label{eq:p-p}
H (x, -p)=H(x,p),    \quad   \forall (x,p)\in \R^n\times \R^n.
\end{equation}
In order to define the characteristic points, we need the notion of limiting normal cone which can be defined as 
$$
N^L_C(x)=\{ p\in \R^n\ | \ \exists x_h\in C,\quad  x_h\to x,\quad    N_C^{P}(x_h)\ni p_h\to p\}. 
$$
We observe that $N_C^{P}(x)\subset N^L_C(x)$, for every $x\in C$. 
We warn the reader that the following definition is not a standard one.
  \begin{definition}[Characteristic points]
  We say that $x\in \R^n\setminus \mathcal K$ is a characteristic point at time $\tau >0$ if   $ T_{\mathcal K}(x)=\tau$ and there exists $\eta \in N^L_{\mathcal R(\tau )}(x)\setminus\{ 0\}$ such that 
$H(x,\eta )=0$.

In this case, we will write  $x\in E(\tau )$. 
  \end{definition}
We have the following 
\begin{proposition}\label{p:closed}
Let $f_1,\ldots ,f_m$ be locally Lipschitz continuous vector fields. Then, $E(\tau )$ is a closed set, for every $\tau >0$. 
\end{proposition}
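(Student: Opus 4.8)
The plan is to argue sequentially. I take a sequence $(x_h)\subset E(\tau)$ with $x_h\to x$ and aim to show $x\in E(\tau)$. For each $h$, the definition of $E(\tau)$ supplies the level-set constraint $T_{\mathcal K}(x_h)=\tau$ together with a vector $\eta_h\in N^L_{\mathcal R(\tau)}(x_h)\setminus\{0\}$ such that $H(x_h,\eta_h)=0$. The three defining conditions — membership in the level set, membership in the limiting normal cone, and the vanishing of $H$ — will each be passed to the limit, the normal-cone condition being the delicate one.

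First I normalize. Since $H(x,\cdot)$ is positively homogeneous of degree one in the covariable and $N^L_{\mathcal R(\tau)}(x_h)$ is a cone (by Remark \ref{r:i}, the proximal cone is conic, and this property survives the limiting construction), I may replace $\eta_h$ by $\eta_h/|\eta_h|$ and assume $|\eta_h|=1$ without destroying either $\eta_h\in N^L_{\mathcal R(\tau)}(x_h)\setminus\{0\}$ or $H(x_h,\eta_h)=0$. Compactness of $\mathbb S^{n-1}$ then yields a subsequence (not relabeled) with $\eta_h\to\eta$ and $|\eta|=1$; in particular $\eta\neq 0$, which is exactly what the normalization is designed to secure. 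Because each $f_i$ is continuous, $H$ is continuous on $\R^n\times\R^n$, so $H(x,\eta)=\lim_h H(x_h,\eta_h)=0$.

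The main step is to verify $\eta\in N^L_{\mathcal R(\tau)}(x)$, that is, that the limiting normal cone has closed graph. This does not follow formally from membership at each $x_h$, since $N^L$ is itself defined through a limit of proximal normals, so a diagonal argument is required. Unwinding the definition of $N^L_{\mathcal R(\tau)}(x_h)$ furnishes, for each $h$, points $x_{h,j}\in\mathcal R(\tau)$ with $x_{h,j}\to x_h$ and proximal normals $p_{h,j}\in N^P_{\mathcal R(\tau)}(x_{h,j})$ with $p_{h,j}\to\eta_h$ as $j\to\infty$. Choosing $j=j(h)$ so that $|x_{h,j(h)}-x_h|<1/h$ and $|p_{h,j(h)}-\eta_h|<1/h$, I obtain $x_{h,j(h)}\to x$ inside $\mathcal R(\tau)$ and $p_{h,j(h)}\to\eta$ with $p_{h,j(h)}\in N^P_{\mathcal R(\tau)}(x_{h,j(h)})$, whence $\eta\in N^L_{\mathcal R(\tau)}(x)$ straight from the definition.

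It remains to check the level-set condition $T_{\mathcal K}(x)=\tau$, which also gives $x\notin\mathcal K$ since $\tau>0$. Here I use that $\mathcal R(\tau)$ is closed, so $x\in\mathcal R(\tau)$ and $T_{\mathcal K}(x)\le\tau$, while continuity of $T_{\mathcal K}$ under the standing hypotheses makes $\{T_{\mathcal K}=\tau\}$ closed and hence forces $T_{\mathcal K}(x)=\tau$ in the limit; alternatively, one observes that a nonzero element of $N^L_{\mathcal R(\tau)}(x)$ cannot occur at an interior point of $\mathcal R(\tau)$, so $x\in\partial\mathcal R(\tau)$, which again pins the minimum time to $\tau$. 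Collecting the three limits shows $x\in E(\tau)$, so $E(\tau)$ is closed. The only genuine obstacle is the closed-graph step for $N^L$ carried out via diagonalization; the normalization, the continuity of $H$, and the level-set passage are routine once the conic structure and the continuity of the data are invoked.
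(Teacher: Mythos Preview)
Your proof is correct and follows essentially the same approach as the paper's: normalize the limiting normals onto the sphere, extract a convergent subsequence, use continuity of $H$, and run a diagonal argument to obtain the closed-graph property of $N^L$. You are in fact more careful than the paper in explicitly addressing the level-set constraint $T_{\mathcal K}(x)=\tau$, which the paper's own proof leaves implicit.
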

We give the proof for the reader's convenience. 
\begin{proof} 
  Let $x_h\in E(\tau )$, $h\in \N$, be a sequence converging to $x$.  We want to show that $x\in E(\tau )$. Since $x_h\in E(\tau )$, we can find $\eta_h\in N^L_{\mathcal R(\tau )}(x_h)\cap \mathbb S^{n-1}$ such that $H(x_h, \eta_h)=0$.
Moreover,  due to the fact that $\eta_h\in \mathbb S^{n-1}$, we may assume that $\eta_h$ converges to $\eta\in \mathbb S^{n-1}$ up to a subsequence still denoted by $\eta_h$. Furthermore, in view of the continuity of $f_1,\ldots ,f_m$, we have that $H(x, \eta )=0$.
We claim that $\eta \in N^L_{\mathcal R(\tau )}(x)$. 
Indeed,  by the definition of $N^L_{\mathcal R(\tau )}(x_h)$, we can find $y_h\in \partial \mathcal R(\tau )$,  $\tilde{\eta}_h\in N^P_{\mathcal R(\tau )}(y_h)$, and a sequence of positive numbers, $a_h\downarrow 0$, such that 
$$
|y_h-x_h|<a_h\quad\text{and}\quad  |\eta_h-\tilde{\eta}_h|<a_h.  
$$
Hence, we conclude that $y_h\to x$ and $\tilde{\eta}_h\to \eta$. This completes the proof.   
\end{proof}

Let us show that, in the complement of $E(\tau )$, the following local Petrov condition holds. 
\begin{theorem}\label{prop1}
Assume that $f_1,\ldots ,f_m$ are locally Lipschitz continuous and let $\tau>0$.  Let us suppose that  
\begin{equation}\label{assumption}
\partial R(\tau ) \setminus E (\tau)\not=\emptyset . 
\end{equation}
	
		Then, for every $y_0\in  \partial R(\tau ) \setminus E(\tau)$ and for every $\delta >0$, with $\bar{B}_\delta (y_0)\cap E(\tau )=\emptyset$, there exists $\mu > 0$ such that  
		\begin{equation}\label{eq:petrov}
			\inf_{u\in \bar{B}^m_1(0)}\; \, \eta\cdot f(y)u  \leq -\mu,
		\end{equation}
		for every $y\in  B_\delta (y_0)\cap \partial R(\tau )$ and for every $\eta \in  N^{P}_{R(\tau)}(y)\cap \mathbb S^{n-1}$.
	\end{theorem}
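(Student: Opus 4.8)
The plan is to reduce \eqref{eq:petrov} to a uniform lower bound on the Hamiltonian and then run a compactness-and-closedness argument parallel to the proof of Proposition \ref{p:closed}. For the reduction, fix $y$ and $\eta$ and note that $\eta\cdot f(y)u=\sum_{i=1}^m (f_i(y)\cdot\eta)\,u_i$ is a linear functional of $u$, whose infimum over the closed unit ball $\bar B^m_1(0)$ equals minus the Euclidean norm of its coefficient vector; that is,
$$
\inf_{u\in \bar B^m_1(0)}\eta\cdot f(y)u=\inf_{u\in\bar B^m_1(0)}\sum_{i=1}^m (f_i(y)\cdot\eta)\,u_i=-\Big(\sum_{i=1}^m (f_i(y)\cdot\eta)^2\Big)^{1/2}=-H(y,\eta).
$$
Hence \eqref{eq:petrov} is equivalent to the existence of $\mu>0$ with $H(y,\eta)\ge\mu$ for every $y\in B_\delta(y_0)\cap\partial R(\tau)$ and every $\eta\in N^P_{R(\tau)}(y)\cap\mathbb S^{n-1}$. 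So the theorem amounts to a uniform strictly positive lower bound for $H$ along proximal unit normals over the piece of $\partial R(\tau)$ contained in $B_\delta(y_0)$.

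Next I would argue by contradiction. If no such $\mu$ existed, I could select sequences $y_h\in B_\delta(y_0)\cap\partial R(\tau)$ and $\eta_h\in N^P_{R(\tau)}(y_h)\cap\mathbb S^{n-1}$ with $H(y_h,\eta_h)\to 0$. Since $\bar B_\delta(y_0)$ and $\mathbb S^{n-1}$ are compact and $\partial R(\tau)$ is closed, after passing to a subsequence I may assume $y_h\to\bar y\in\bar B_\delta(y_0)\cap\partial R(\tau)$ and $\eta_h\to\bar\eta\in\mathbb S^{n-1}$. Continuity of $f_1,\ldots,f_m$, and hence of $H$, gives $H(\bar y,\bar\eta)=0$. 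Moreover, because $y_h\in R(\tau)$, $y_h\to\bar y$, and $\eta_h\in N^P_{R(\tau)}(y_h)$ with $\eta_h\to\bar\eta$, the very definition of the limiting normal cone yields $\bar\eta\in N^L_{R(\tau)}(\bar y)$, and $\bar\eta\neq 0$ since it is a unit vector. Finally, as $\bar y\in\partial R(\tau)$ has minimum time exactly $\tau$, the point $\bar y$ satisfies all the defining conditions of a characteristic point, so $\bar y\in E(\tau)$. Since $\bar y\in\bar B_\delta(y_0)$, this contradicts the hypothesis $\bar B_\delta(y_0)\cap E(\tau)=\emptyset$ and proves the claim.

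The routine parts are the Hamiltonian identity of the first step and the extraction of convergent subsequences; the membership $\bar\eta\in N^L_{R(\tau)}(\bar y)$ is then immediate, since $\bar\eta$ is already a limit of proximal normals at base points converging inside $R(\tau)$—even more directly than in Proposition \ref{p:closed}, where one starts from limiting normals. The step that genuinely requires care, and the natural main obstacle, is to guarantee that $\mathcal T_{\mathcal K}(\bar y)=\tau$, i.e. that every boundary point of $R(\tau)$ has minimum time exactly $\tau$; this is precisely what upgrades $\bar y$ from a boundary point carrying a degenerate normal to a bona fide element of $E(\tau)$, and hence delivers the contradiction.
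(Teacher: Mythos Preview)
Your proof is essentially the paper's own: rewrite \eqref{eq:petrov} as the uniform lower bound $H(y,\eta)\ge\mu$, negate, extract convergent subsequences $(y_h,\eta_h)\to(\bar y,\bar\eta)$ by compactness of $\bar B_\delta(y_0)\times\mathbb S^{n-1}$, and obtain $\bar\eta\in N^L_{R(\tau)}(\bar y)$ with $H(\bar y,\bar\eta)=0$, contradicting $\bar B_\delta(y_0)\cap E(\tau)=\emptyset$. As for the point you single out as the ``main obstacle''---that $\mathcal T_{\mathcal K}(\bar y)=\tau$---the paper does not address it either and simply asserts $\bar y\in E(\tau)$; it is automatic once $\mathcal T_{\mathcal K}$ is continuous (as under H\"ormander's condition, which is in force whenever this theorem is applied), so you are not missing anything relative to the paper.
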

\begin{proof}
We observe that \eqref{eq:petrov} can be rewritten as 
\begin{equation}\label{eq:petrov2}
H(y, \eta )\geq \mu \qquad  \forall \eta \in  N^{P}_{R(\tau)}(y)\cap \mathbb S^{n-1}. 
\end{equation}
In order to prove this inequality, we argue by contradiction assuming that there exist $y_h\in \overline{B}_\delta (y_0)\cap \partial \mathcal R(\tau )$ and $\eta_h\in N_{\mathcal R(\tau )}^P(y_h)\cap \mathbb S^{n-1}$ so that $y_h\to \bar y\in  \overline{B}_\delta (y_0)\cap \partial \mathcal R(\tau )$,  $\eta_h \to \bar\eta \in \mathbb S^{n-1}$, and $H(\bar y , \bar \eta )=0$.
 Hence, we find that  $\bar \eta \in N^L_{\mathcal R(\tau )}(y)$ and $\bar y\in E(\tau )\cap \overline{B}_\delta (y_0)$, in contrast with the fact that $\overline{B}_\delta (y_0)\cap E (\tau )=\emptyset$. Then, \eqref{eq:petrov2} follows and the proof is complete. 
\end{proof}

\section{Regularity of the minimum time function}\label{s:4}

In this section we give an "abstract" result, the assumptions of which may seem hard to check. 
However,  in the next section, we will show that such assumptions are automatically fulfilled for real-analytic vector fields in low dimension.

We begin with the following 
 
\begin{definition}
For every real-valued function $g$, defined on an open subset $\Omega \subset \R^n$, the \textit{Lipschitz singular support} of $g$ is the smallest closed subset of $\Omega$ such that $g$ is locally Lipschitz outside such a set.  We will denote the Lipschitz singular support of $g$ by $\sing\supp_{Lip}g$.
\end{definition} 
In other words if $x\notin \sing\supp_{Lip}g$ then there exists an open set containing $x$, $V_x$, such that $g$ is Lipschitz continuous on $V_x$. 

\begin{theorem}\label{t:at}
  Let $\mathcal K\subset \R^n$ be a nonempty closed set, assume that $f_1,\ldots ,f_m$  are  of class $C^{\infty}$ and satisfy H\"ormander's condition \eqref{a:hor}\footnote{We point out that the above assumptions are stated in a global form for the sake of simplicity. Indeed, since Theorem \ref{t:at} is a local result, it suffices to assume them fulfilled in a neighborhood of a point of interest.}.
Furthermore, suppose that, given $x_0\in \R^n\setminus \mathcal K$, there exist $\epsilon >0$ and a closed set $N$ of measure zero such that 
\begin{equation}\label{eq:loccp}
B_\epsilon (x_0)\cap E(\tau )\subset N, \qquad \forall \tau >0.
\end{equation}
Then,  $\sing\supp_{Lip} T_{\mathcal K}\cap B_\epsilon (x_0)\subset N$. In particular,   $\sing\supp_{Lip} T_{\mathcal K}\cap B_\epsilon (x_0)$ is a zero measure set. 
\end{theorem}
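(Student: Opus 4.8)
The plan is to prove the pointwise statement that $T_{\mathcal K}$ is locally Lipschitz at every $z\in B_\epsilon(x_0)\setminus N$; since $N$ is closed this set is open, so this yields $\sing\supp_{Lip}T_{\mathcal K}\cap B_\epsilon(x_0)\subset N$, and the measure-zero conclusion then follows at once from $|N|=0$. Fix such a $z$ and set $\tau=T_{\mathcal K}(z)$; shrinking $\epsilon$ so that $z\notin\mathcal K$, we have $\tau>0$. The key idea is \emph{not} to work with the target $\mathcal K$ directly, but to replace it by the sublevel set $\mathcal R(s)$ for a time $s\in(0,\tau)$ chosen close to $\tau$, and to exploit that hypothesis \eqref{eq:loccp} holds \emph{for every} time---in particular at the auxiliary level $s$.

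Concretely, I would first record the dynamic programming identity $T_{\mathcal K}(w)=s+T_{\mathcal R(s)}(w)$, valid for all $w$ with $T_{\mathcal K}(w)\ge s$. Since $T_{\mathcal K}$ is continuous under \eqref{a:hor} and $T_{\mathcal K}(z)=\tau>s$, this identity holds on a full neighborhood of $z$, so it suffices to prove that $T_{\mathcal R(s)}$ is Lipschitz near $z$. For the latter I would invoke Theorem \ref{carlo} with target $\mathcal R(s)$ and base point $z$ (the vector fields are $C^\infty$, hence locally $C^{1,1}$, and $T_{\mathcal R(s)}(z)=\tau-s<+\infty$). What must be checked is the local Petrov condition \eqref{eq:ipoc} on a neighborhood of the arrival set $\mathcal F(z)$ computed for the target $\mathcal R(s)$, i.e. the set of endpoints $y_{z,u}(\tau-s)\in\partial\mathcal R(s)$ of the time-optimal controls steering $z$ to $\mathcal R(s)$, which is nonempty and compact by the standard existence and compactness of time-optimal controls.

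The heart of the argument is the localization of these endpoints. Using the elementary bound $|y_{z,u}(t)-z|\le L\,t$ for small $t$, valid uniformly in $u\in\mathcal U$ with $L$ a local bound for $|f|$ near $z$ (controls take values in $\bar B^m_1(0)$), every point of $\mathcal F(z)$ lies in $\bar B_{L(\tau-s)}(z)$. Writing $d_0=\dist(z,N)>0$ (positive because $N$ is closed and $z\notin N$), I would then choose $s<\tau$ close enough to $\tau$ that $L(\tau-s)<\min\{d_0,\ \epsilon-|z-x_0|\}$; this forces $\mathcal F(z)\subset B_\epsilon(x_0)\setminus N$. Applying \eqref{eq:loccp} \emph{at the level} $s$ gives $B_\epsilon(x_0)\cap E(s)\subset N$, whence $\mathcal F(z)\cap E(s)=\emptyset$. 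Since $\mathcal F(z)$ is compact and $E(s)$ is closed by Proposition \ref{p:closed}, there is an open set $A\supset\mathcal F(z)$ with $\bar A\cap E(s)=\emptyset$; covering any $C\Subset A$ by finitely many of the balls furnished by Theorem \ref{prop1} extracts a single $\mu>0$ for which \eqref{eq:petrov}---equivalently \eqref{eq:ipoc}---holds on $C\cap\partial\mathcal R(s)$. Theorem \ref{carlo} then delivers the Lipschitz regularity of $T_{\mathcal R(s)}$, hence of $T_{\mathcal K}$, near $z$.

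I would flag as the main obstacle the passage from the pointwise/neighborhood formulation of Petrov's condition in Theorem \ref{prop1} to the uniform form \eqref{eq:ipoc} demanded by Theorem \ref{carlo}: this relies on the compactness of $\mathcal F(z)$ and a finite-covering extraction of one constant $\mu$, together with the control on how the arrival set shrinks as $s\uparrow\tau$. The conceptual point to get right is that hypothesis \eqref{eq:loccp} must be used at the \emph{auxiliary} level $s$, not at $\tau$; it is precisely the validity of \eqref{eq:loccp} for all times that allows the shrinking arrival set $\mathcal F(z)$ to inherit the condition $z\notin N$ and thereby avoid the characteristic set $E(s)$.
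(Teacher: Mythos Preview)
Your proposal is correct and follows the same overall strategy as the paper: reduce to the auxiliary target $\mathcal R(s)$ via the dynamic programming identity, localize the arrival set $\mathcal F(z)$ away from $E(s)$ using \eqref{eq:loccp} at level $s$, derive the local Petrov condition from Theorem~\ref{prop1}, and conclude by Theorem~\ref{carlo}.

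The only substantive difference is in the localization step. The paper carries it out with Carnot--Carath\'eodory balls: it fixes nested balls $B^C_{\delta_3}(x)\subset B^C_{\delta_2}(x)\subset B^C_{\delta_1}(x)$, identifies $\mathcal T_{\mathcal R(\tau)}(y)=\inf_{z\in\partial\mathcal R(\tau)}d(y,z)$, and uses the triangle inequality for $d$ to trap the minimizers in $\bar B^C_{\delta_1}(x)$, which was chosen disjoint from $N$. You instead use the elementary Euclidean bound $|y_{z,u}(t)-z|\le L\,t$ to shrink the arrival set as $s\uparrow\tau$. Your route avoids the Carnot--Carath\'eodory apparatus (Remark~\ref{r:smco}, estimate~\eqref{eq:cc0}) entirely and is a bit more direct; the paper's route, on the other hand, never needs a local bound on $|f|$ and makes the connection to the sub-Riemannian distance explicit. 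Both are valid localizations of the same argument. One cosmetic point: rather than ``shrinking $\epsilon$ so that $z\notin\mathcal K$'', it is cleaner to assume at the outset $B_\epsilon(x_0)\subset\R^n\setminus\mathcal K$ (possible since $\mathcal K$ is closed and $x_0\notin\mathcal K$), as the paper implicitly does.
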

\begin{proof}
Let $x_0\in \R^n\setminus \mathcal K$, let $\epsilon>0$ and let $N$ be a closed set of measure zero such that
  \begin{equation}\label{eq:assu1}
B_\epsilon (x_0)\cap E(\tau )\subset N, \qquad \forall \tau >0. 
  \end{equation}
We want to show that, for every $x\in  B_\epsilon (x_0)\setminus N$, there exists $\delta >0$ such that 
 $T_{\mathcal K}$ is Lipschitz continuous on $B_\delta (x)$. 

Let  $x\in  B_\epsilon (x_0)\setminus N$, then we can find three numbers $\delta_1>\delta_2>\delta_3>0$ so that
\begin{itemize}
\item[$(A)$] $\bar{B}^C_{\delta_1}(x)$ is a compact subset of $B_\epsilon (x_0)$ and $B^C_{\delta_1}(x)\cap N=\emptyset$;
\item[$(B)$]  $\delta_1>3\delta_2$. 
\end{itemize}
 In particular, we notice that
 $$
B^C_{\delta_3}(x)\subset B^C_{\delta_2}(x)\subset   B^C_{3\delta_2}(x)\subset B^C_{\delta_1}(x).
 $$
 Take $\tau >0$ such that
 \begin{equation}\label{eq:deftau}
\tau <\min \{ T_{\mathcal K}(z)\, | \,  z\in B^C_{\delta_3} (x)\}\quad \text{ and }\quad  {\mathcal R}(\tau )\cap B^C_{\delta_2} (x)\not=\emptyset .   
 \end{equation}
 We claim that for every $y\in  B^C_{\delta_3} (x)$, for every $z_1\in \partial \mathcal R(\tau )\cap B^C_{\delta_2}(x)$ and for every $z_2\in \partial \mathcal R(\tau )\setminus B^C_{\delta_1}(x)$ we have that
 \begin{equation}\label{claim}
d(y,z_2)>d(y,z_1). 
   \end{equation}
 Indeed, by the triangular inequality
 $$
d(y,z_2)\geq d(x,z_2)-d(x,y)>\delta_1 -\delta_3,   
 $$
and
$$
d(y,z_1)\le d(y,x)+d(x,z_1)<\delta_3+\delta_2 . 
$$
Hence, the above inequalities and condition $(B)$ yield \eqref{claim}. 
Let $\delta >0$ be such that $B_\delta (x)\subset B^C_{\delta_3}(x)$.

Let us observe that, for every $y\in B_\delta (x)$, 
\begin{equation}\label{eq:datat}
\mathcal T_{\mathcal K} (y)=\tau +\mathcal T_{\mathcal R(\tau )}(y).
\end{equation}  
(We recall that the symbol $\mathcal T_{\mathcal R(\tau )}$ stands for the minimum time function associated with equation \eqref{eq:s0} and target $\mathcal R(\tau)$.) Furthermore, for every $y\in B_\delta (x)$, we have that 
$$
\mathcal T_{\mathcal R(\tau )}(y)=\inf_{z\in \partial \mathcal R(\tau ) } d(y,z).
$$
In view of \eqref{claim}, the infimum above is a minimum and is attained on $\partial \mathcal R(\tau )\cap \bar B^C_{\delta_2}(x)$  (which, by construction, is disjoint from the set $E(\tau)$). Then, by Theorem \ref{prop1}, the assumptions of Theorem \ref{carlo} are satisfied and, possibly reducing $\delta >0$,  $\mathcal T_{R(\tau )}$ is Lipschitz continuous on $B_\delta (x)$. Hence, due to \eqref{eq:datat}, $\mathcal T_{\mathcal K}$ is Lipschitz on $B_\delta (x)$. This completes the proof. 
\end{proof} 
\begin{remark}\label{r:ponte}
(i) We observe that, if in \eqref{eq:loccp} one can take $N=\emptyset$, then Theorem \ref{t:at} implies the Lipschitz continuity of $\mathcal T_{\mathcal K}$ on $B_\epsilon (x_0)$. 

\noindent 
(ii) If  \eqref{eq:loccp}  holds at every point in $\R^n\setminus \mathcal K$, then, 
using a compact exhaustion and the fact that the union of countably many sets of measure zero is a set of measure zero, one has that $\sing\supp_{Lip}\, \mathcal T_{\mathcal K}$ is a set of measure zero. 
\end{remark} 
We observe that, as a consequence of Theorem \ref{t:at} and Rademacher's Theorem, we have  the following 
\begin{corollary}\label{c:diff}
If the assumptions of Theorem \ref{t:at} are satisfied, then $\mathcal T_{\mathcal K}$ is differentiable a.e. on $B_\epsilon (x_0)$. 
\end{corollary}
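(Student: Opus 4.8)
The plan is to derive Corollary \ref{c:diff} directly from Theorem \ref{t:at} together with Rademacher's theorem, so the argument is short and essentially bookkeeping about null sets. Under the hypotheses of Theorem \ref{t:at} we are granted a closed set $N$ of measure zero with
\begin{equation}\label{eq:cor-incl}
\sing\supp_{Lip} T_{\mathcal K}\cap B_\epsilon (x_0)\subset N,
\end{equation}
and $N$ itself has Lebesgue measure zero. The goal is to upgrade this local Lipschitz statement to almost-everywhere differentiability on $B_\epsilon (x_0)$.

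First I would fix the open set $\Omega := B_\epsilon (x_0)\setminus N$. By the defining property of the Lipschitz singular support recorded just after the definition, every point $x\in B_\epsilon(x_0)\setminus \sing\supp_{Lip} T_{\mathcal K}$ admits an open neighborhood $V_x$ on which $T_{\mathcal K}$ is Lipschitz continuous; in view of \eqref{eq:cor-incl} this applies in particular to every $x\in \Omega$. Thus $T_{\mathcal K}$ is locally Lipschitz on the open set $\Omega$. Rademacher's theorem, applied on each such chart $V_x$ (or directly on the open set $\Omega$, since local Lipschitz continuity on an open set is all Rademacher requires), then yields a set $Z_x\subset V_x$ of measure zero off which $T_{\mathcal K}$ is differentiable. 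Covering $\Omega$ by countably many such neighborhoods---for instance by extracting a countable subcover of the open cover $\{V_x\}_{x\in\Omega}$, which is possible since $\R^n$ is second countable---the union of the corresponding null sets is again of measure zero, so $T_{\mathcal K}$ is differentiable at every point of $\Omega$ outside a set $Z$ with $|Z|=0$.

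Finally I would assemble the null sets. The set of points of $B_\epsilon(x_0)$ at which $T_{\mathcal K}$ can fail to be differentiable is contained in $N\cup Z$, and both $N$ and $Z$ have Lebesgue measure zero, hence so does their union. Therefore $T_{\mathcal K}$ is differentiable at almost every point of $B_\epsilon (x_0)$, which is the assertion of the corollary.

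I do not expect any serious obstacle here: the only points requiring mild care are that Rademacher's theorem is a statement about locally Lipschitz functions rather than globally Lipschitz ones, so one must either work chart-by-chart or note that the conclusion of Theorem \ref{t:at} already delivers the requisite \emph{local} Lipschitz regularity on $\Omega$, and that the countable-subcover step is what keeps the exceptional set null. Beyond that, the result is an immediate corollary, and the proof amounts to combining the inclusion \eqref{eq:cor-incl} with the standard differentiation theorem.
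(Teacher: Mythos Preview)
Your argument is correct and follows exactly the route the paper indicates: combine the conclusion of Theorem~\ref{t:at} with Rademacher's theorem. The paper merely states this in one line, while you have spelled out the countable-subcover and null-set bookkeeping, but the approach is the same.
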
 

\section{Singular time-optimal trajectories}\label{s:5}

In this section we discuss the relationship between 
characteristic points and some special time-optimal trajectories. We begin with the following  
 \begin{definition}[Singular time optimal trajectories]
 For $x\notin \mathcal K$, $y_{x,u}$ is a singular time-optimal trajectory, if   $y_{x,u}( T_{\mathcal K}(x))\in \mathcal K$ and there exists a Lipschitz arc $p :[0,  T_{\mathcal K}(x)]\rightarrow \R^n\setminus \{0\}$ such that
 \begin{equation}\label{eq:sot}
			\begin{cases}
p'(t)=-((\nabla_x f)(y_{x,u}(t)) u(t))^\star  p(t),\, \text{for a.e.   }t\in [0,T_{\mathcal K}(x)],   
\\
p(0)\in -N_{R(\tau )}(x) ,		 
			\end{cases}
		\end{equation}
and		
\begin{equation}\label{eq:sot22}
H(y_{x,u}(t),p(t)) =0\quad \text{for every   }t\in [0,T_{\mathcal K}(x)].   
\end{equation} 
		  
\end{definition}

In order to give necessary and sufficient conditions for a point to be a characteristic point in terms of singular time-optimal trajectories, we need some preliminary results. 
We consider  the minimum time function with target $\overline{\R^n\setminus \mathcal R(\tau )}$. We need the following version of the Maximum Principle, see e.g. \cite[Theorem 3.1]{CFS}.
  \begin{theorem}\label{t:suff0}
Let $f_1,\ldots ,f_m$ be locally $C^{1,1}$ and  let $\tau >0$. Let us fix arbitrarily $x \in \partial \mathcal R(\tau)$,  let  $\bar x\in \mathcal K$ be such that $y=y_{\bar x,u}$ is 
 a time-optimal trajectory with target $\overline {\R^n\setminus \mathcal R(\tau )}$. Then, for every 
 $\eta \in N^P_{\mathcal R(\tau )}(x)\cap \mathbb S^{n-1}$, there exists a Lipschitz arc, $p :[0,\tau]\rightarrow \R^n\setminus \{0\}$, such that the pair $(y,p)$ satisfies 
$$
\begin{cases}
y'(t)=f(y(t))u(t) , 
\\
p'(t)=-((\nabla_x  f)(y (t))u(t))^* p(t),  
\end{cases}
$$
 for a.e. $t\in [0,\tau ]$, with 
 $$
 y(0)=\bar x, \quad y(\tau )=x\text{ and }p(\tau )=-\eta ,
 $$
 and
 $$
 -f(y(t))u(t)\cdot p(t)=  H(y(t),p(t)), \qquad \text{for a.e. } t\in [0,\tau ].  
 $$
 \end{theorem}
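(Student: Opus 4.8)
The plan is to read the statement as a Pontryagin Maximum Principle for the time–optimal control problem with the (nonsmooth) target $S:=\overline{\R^n\setminus\mathcal R(\tau)}$, for which the hypothesis already supplies an optimal pair: the admissible arc $y=y_{\bar x,u}$ steers $\bar x$ to $x=y(\tau)\in\partial\mathcal R(\tau)=\partial S$ in the minimal time $\tau$. Introducing the control–dependent Hamiltonian $(y,p,v)\mapsto p\cdot f(y)v$, I would first observe that, by the very definition of $H$ and the Cauchy–Schwarz inequality, the maximized value over $v\in\bar B^m_1(0)$ equals $H(y,p)$ while the minimized value equals $-H(y,p)$. With this in hand, the natural first step is to invoke the nonsmooth maximum principle \cite[Theorem 3.1]{CFS} for the optimal pair $(y,u)$, which produces a Lipschitz adjoint arc together with the adjoint equation along $y$ and a transversality relation at the endpoint expressed through the proximal normal cone.

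Concretely, \cite[Theorem 3.1]{CFS} yields a Lipschitz arc $q:[0,\tau]\to\R^n$ solving $q'(t)=-((\nabla_x f)(y(t))u(t))^* q(t)$ for a.e.\ $t$, together with the maximum condition $q(t)\cdot f(y(t))u(t)=H(y(t),q(t))$ a.e. Once the terminal value is pinned down as a unit normal (see below), nonvanishing of $q$ comes for free: the adjoint equation is linear and homogeneous, so $q(\tau)\ne 0$ forces $q(t)\ne 0$ for every $t$. I would then set $p:=-q$, which solves the same sign–invariant adjoint equation and never vanishes, and I would use the evenness $H(x,-p)=H(x,p)$ recorded in \eqref{eq:p-p} to convert the maximum condition for $q$ into the stated identity $-f(y(t))u(t)\cdot p(t)=H(y(t),p(t))$ for a.e.\ $t$. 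The state equation, the endpoints $y(0)=\bar x$ and $y(\tau)=x$, and the regularity $p:[0,\tau]\to\R^n\setminus\{0\}$ are then all in place.

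What remains—and what I expect to be the main obstacle—is to realize the terminal condition in the precise form $p(\tau)=-\eta$ for \emph{every} prescribed $\eta\in N^P_{\mathcal R(\tau)}(x)\cap\mathbb S^{n-1}$, i.e.\ $q(\tau)=\eta$. A generic transversality condition is phrased through the normal cone of the true target $S=\overline{\R^n\setminus\mathcal R(\tau)}$ at $x$, and at a boundary point the proximal normals of $\mathcal R(\tau)$ and of its complement need not be simple negatives of one another, precisely because $\partial\mathcal R(\tau)$ is nonsmooth. The device I would exploit is the symmetry of the control ball $\bar B^m_1(0)$: reversing time, $s\mapsto y(\tau-s)$ with control $-u(\tau-s)$, turns the escape trajectory into a time–optimal trajectory steering $x$ to $\mathcal K$ in time $\mathcal T_{\mathcal K}(x)=\tau$, so that a proximal outer normal $\eta$ to the sublevel set $\{\mathcal T_{\mathcal K}\le\tau\}=\mathcal R(\tau)$ at $x$ becomes an admissible terminal covector for this reach–$\mathcal K$ problem; this is exactly the realization that \cite[Theorem 3.1]{CFS} is designed to deliver, namely an extremal attached to any prescribed proximal normal of the reachable set. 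Checking that every such $\eta$ (and not merely some nonzero element of the limiting cone) is attained is the delicate geometric point; once it is granted, normalizing on $\mathbb S^{n-1}$ gives $q(\tau)=\eta$, whence $p(\tau)=-\eta$ and the proof is complete.
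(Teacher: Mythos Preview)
The paper does not give an independent proof of this theorem: immediately before the statement it writes ``We need the following version of the Maximum Principle, see e.g.\ \cite[Theorem 3.1]{CFS},'' and no further argument is supplied. Your proposal invokes exactly the same reference, so at the level of ``approach'' you are aligned with the paper; the difference is only that you unpack the citation---the sign change $p=-q$, the evenness \eqref{eq:p-p}, and nonvanishing via linearity of the adjoint equation---whereas the paper simply asserts the result as a restatement of \cite[Theorem 3.1]{CFS}.

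One comment on your elaboration: the transversality issue you flag (that $N^P_{\overline{\R^n\setminus\mathcal R(\tau)}}(x)$ and $-N^P_{\mathcal R(\tau)}(x)$ need not coincide for nonsmooth boundaries) is real, and your time-reversal remedy is precisely how the paper itself uses the result. Indeed, the paper immediately restates Theorem~\ref{t:suff0} in the time-reversed form Theorem~\ref{t:suff0bis}, where the trajectory starts at $x\in\partial\mathcal R(\tau)$ and the datum $p(0)=-\eta$ with $\eta\in N^P_{\mathcal R(\tau)}(x)$ is the natural transversality output for a sensitivity-type maximum principle attached to the sublevel set $\{\mathcal T_{\mathcal K}\le\tau\}$; this reversed version is the only one actually applied downstream. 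So your worry is legitimate, your resolution is the right one, and it matches what the paper does operationally even though the paper does not spell it out.
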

Observe that,  if $p(\cdot )$ satisfies the above equations, then $\lambda p(\cdot )$ satisfies the same equations for any $\lambda >0$. Thus, inverting the direction of the time variable, 
Theorem \ref{t:suff0} can be rewritten in the following useful form. 
  \begin{theorem}\label{t:suff0bis}
Let $f_1,\ldots ,f_m$ be locally $C^{1,1}$ and  let $\tau >0$. Let us fix arbitrarily $x \in \partial \mathcal R(\tau)$,  let  $y=y_{x,u}$ be 
 a time-optimal trajectory with target $\mathcal K$ an let $\bar x:=y(\tau ) \in \mathcal K$. Then, for every 
 $\eta \in N^P_{\mathcal R(\tau )}(x)\setminus \{ 0\}$, there exists a Lipschitz arc, $p :[0,\tau]\rightarrow \R^n\setminus \{0\}$, such that
\begin{equation}\label{eq:bh0bis}
\begin{cases}
y'(t)=f(y(t))u(t) , 
\\
p'(t)=-((\nabla_x  f)(y (t))u(t))^* p(t),  
\end{cases}
\end{equation} 
 for a.e. $t\in [0,\tau ]$, with 
 \begin{equation}\label{eq:bc0bis}
 y(0)=x, \quad y(\tau )=\bar x\text{ and }p(0)=-\eta ,
 \end{equation}
 and
 \begin{equation}\label{eq:maxh0bis}
 f(y(t))u(t)\cdot p(t)= H(y(t),p(t)),   \qquad \text{for a.e. } t\in [0,\tau ].  
 \end{equation}
 \end{theorem}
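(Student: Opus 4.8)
The plan is to deduce Theorem \ref{t:suff0bis} from Theorem \ref{t:suff0} by the two manipulations the statement already advertises: rescaling the adjoint arc and reversing the time direction. First I would handle the passage from $\eta\in N^P_{\mathcal R(\tau)}(x)\cap\mathbb S^{n-1}$ to an arbitrary $\eta\in N^P_{\mathcal R(\tau)}(x)\setminus\{0\}$. Since $N^P_{\mathcal R(\tau)}(x)$ is a cone (Remark \ref{r:i}), the normalized vector $\hat\eta:=\eta/|\eta|$ still lies in $N^P_{\mathcal R(\tau)}(x)\cap\mathbb S^{n-1}$; moreover the adjoint equation in \eqref{eq:bh0bis} is linear and homogeneous in $p$, while both sides of \eqref{eq:maxh0bis} are positively homogeneous of degree one in $p$. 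Hence any arc produced for $\hat\eta$ may be multiplied by $|\eta|>0$ to produce the arc required for $\eta$, staying in $\R^n\setminus\{0\}$. Thus it suffices to treat the normalized case.

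The core step is the time reversal. Given the time-optimal trajectory $y=y_{x,u}$ for the target $\mathcal K$ with $y(0)=x\in\partial\mathcal R(\tau)$ and $y(\tau)=\bar x\in\mathcal K$, I would set $\hat y(s):=y(\tau-s)$ and $\hat u(s):=-u(\tau-s)$ for $s\in[0,\tau]$. Since the control set $\bar B^m_1(0)$ is symmetric and $f$ is linear in the control, $\hat u\in\mathcal U$ and $\hat y'(s)=-y'(\tau-s)=f(\hat y(s))\hat u(s)$, so $\hat y=y_{\bar x,\hat u}$ is an admissible trajectory with $\hat y(0)=\bar x$ and $\hat y(\tau)=x$. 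I would then verify that $\hat y$ is time-optimal for the target $\overline{\R^n\setminus\mathcal R(\tau)}$, which is precisely the hypothesis needed to invoke Theorem \ref{t:suff0} at the same point $x\in\partial\mathcal R(\tau)$.

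Establishing this optimality is the delicate point, and I expect it to be the main obstacle. By dynamic programming along the optimal trajectory $y$ one has $T_{\mathcal K}(y(t))=\tau-t$, hence $T_{\mathcal K}(\hat y(s))=s$. In particular $\hat y(\tau)=x\in\partial\mathcal R(\tau)\subset\overline{\R^n\setminus\mathcal R(\tau)}$, so $\hat y$ reaches the target in time $\tau$; and for $s<\tau$ one has $T_{\mathcal K}(\hat y(s))=s<\tau$, whence $\hat y(s)\in\Int\mathcal R(\tau)$ and therefore $\hat y(s)\notin\overline{\R^n\setminus\mathcal R(\tau)}$, so $\tau$ is the exact hitting time along $\hat y$. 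No competitor can do better: if some admissible arc joined $\bar x$ to a point $w\in\overline{\R^n\setminus\mathcal R(\tau)}$ in time $\tau'<\tau$, then reversing it (again by symmetry of the system) would join $w$ to $\bar x\in\mathcal K$ in time $\tau'$, forcing $T_{\mathcal K}(w)\le\tau'<\tau$ and hence $w\in\Int\mathcal R(\tau)$, incompatible with $w\in\overline{\R^n\setminus\mathcal R(\tau)}$. This yields $T_{\overline{\R^n\setminus\mathcal R(\tau)}}(\bar x)=\tau$, so $\hat y$ is optimal.

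With the optimality of $\hat y$ in hand, I would apply Theorem \ref{t:suff0} to $\hat y$ with the normalized covector $\hat\eta$, obtaining a Lipschitz arc $q:[0,\tau]\to\R^n\setminus\{0\}$ with $q(\tau)=-\hat\eta$, solving $q'(s)=-((\nabla_x f)(\hat y(s))\hat u(s))^*q(s)$ and satisfying $-f(\hat y(s))\hat u(s)\cdot q(s)=H(\hat y(s),q(s))$. Reversing time once more, I set $p_0(t):=q(\tau-t)$ and substitute $\hat y(\tau-t)=y(t)$ and $\hat u(\tau-t)=-u(t)$. Then $p_0(0)=q(\tau)=-\hat\eta$, the chain-rule sign cancels the intrinsic minus in the adjoint equation for $q$ while the reversal $\hat u(\tau-t)=-u(t)$ restores it, giving $p_0'(t)=-((\nabla_x f)(y(t))u(t))^*p_0(t)$; correspondingly the maximality relation becomes $f(y(t))u(t)\cdot p_0(t)=H(y(t),p_0(t))$, with $p_0(t)\neq0$ throughout. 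Scaling $p:=|\eta|\,p_0$ as in the first step preserves all these relations and produces $p(0)=-\eta$, so that $(y,p)$ fulfills \eqref{eq:bh0bis}, \eqref{eq:bc0bis} and \eqref{eq:maxh0bis}, completing the proof. The only genuinely non-bookkeeping ingredient is the optimality transfer of the third paragraph, which rests entirely on the time-symmetry of the control system.
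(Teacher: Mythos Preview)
Your proposal is correct and follows exactly the approach the paper indicates: the paper gives no detailed argument beyond observing that the adjoint equation and maximality relation are positively homogeneous in $p$ (so one may rescale) and that reversing the time variable converts Theorem~\ref{t:suff0} into Theorem~\ref{t:suff0bis}. Your write-up simply supplies the details the paper omits, in particular the verification that the time-reversed arc is optimal for the target $\overline{\R^n\setminus\mathcal R(\tau)}$ and the sign bookkeeping in the adjoint and maximality conditions.
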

We will need also the following 
\begin{lemma}\label{l:hcost}
Under the assumptions of Theorem \ref{t:suff0bis}, we have that 
\begin{equation}\label{eq:hcost}
 H(y(t),p(t))=H(x,\eta ),\qquad \text{ for every }t\in [0,\tau ].
\end{equation}
\end{lemma}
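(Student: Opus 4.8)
The plan is to show that the scalar function $g(t):=H(y(t),p(t))$ is constant on $[0,\tau]$ and to identify its value at the endpoint $t=0$. Write $h_i(t):=f_i(y(t))\cdot p(t)$ and $h(t):=(h_1(t),\dots,h_m(t))$, so that $g(t)=|h(t)|$. Since $f_1,\dots,f_m$ are locally $C^{1,1}$ and $y,p$ are Lipschitz arcs, each $h_i$ is Lipschitz on $[0,\tau]$; as the Euclidean norm is $1$-Lipschitz, $g$ is Lipschitz, hence absolutely continuous. Thus it suffices to prove $g'(t)=0$ for a.e.\ $t$, after which $g\equiv g(0)=H(x,-\eta)=H(x,\eta)$ by the boundary condition $p(0)=-\eta$ in \eqref{eq:bc0bis} and the symmetry \eqref{eq:p-p}.

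First I would treat almost every point of $\{t:g(t)>0\}$. There $g^2=\sum_i h_i^2$ gives $g'=\frac1g\sum_i h_i h_i'$, and differentiating $h_i$ by the chain and product rules, together with the adjoint equation in \eqref{eq:bh0bis}, yields a.e.
\begin{equation*}
h_i'(t)=\sum_{j=1}^m u_j(t)\,\bigl([f_j,f_i](y(t))\cdot p(t)\bigr),
\end{equation*}
the two surviving first-order terms $p\cdot Df_i(y)f_j(y)$ and $-\,p\cdot Df_j(y)f_i(y)$ combining exactly into the commutator $[f_j,f_i]=Df_i(f_j)-Df_j(f_i)$. The maximality condition \eqref{eq:maxh0bis} reads $\langle u(t),h(t)\rangle=|h(t)|$ with $|u(t)|\le 1$, which (since $h(t)\neq 0$) forces the unique maximizer $u(t)=h(t)/|h(t)|=h(t)/g(t)$, so that
\begin{equation*}
g'(t)=\frac1{g(t)^2}\sum_{i,j=1}^m h_i(t)h_j(t)\,\bigl([f_j,f_i](y(t))\cdot p(t)\bigr).
\end{equation*}
Since $h_i h_j$ is symmetric in $(i,j)$ while $[f_j,f_i]=-[f_i,f_j]$ is antisymmetric, the double sum vanishes, whence $g'(t)=0$ on $\{g>0\}$.

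It remains to handle $\{t:g(t)=0\}$. Here I would use that $g\ge 0$: almost every point $t_0$ of this measurable set is a Lebesgue density point at which $g$ is differentiable, and choosing a sequence $t_n\to t_0$ with $t_n$ in the set, the difference quotients $\bigl(g(t_n)-g(t_0)\bigr)/(t_n-t_0)$ all vanish, forcing $g'(t_0)=0$. Combining the two cases gives $g'=0$ a.e.\ on $[0,\tau]$, and the absolute continuity of $g$ then yields $g\equiv g(0)$, i.e.\ \eqref{eq:hcost}.

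The main obstacle is precisely the locus $\{g=0\}$, where $H$ fails to be differentiable (the Euclidean norm has a corner at the origin) so the explicit computation is unavailable; this is circumvented by the nonnegativity of $g$ and a density argument. The remainder is a careful but routine chain-rule computation, whose one substantive input is recognizing that the maximality condition pins the control down to $u=h/|h|$, leaving only the manifestly antisymmetric Lie-bracket sum, which cancels.
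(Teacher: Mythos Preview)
Your argument is correct but follows a genuinely different route from the paper's. The paper proves $g'(t)=0$ a.e.\ by an envelope-theorem style trick: since $H(y,p)=\max_{|u|\le 1} f(y)u\cdot p$ and the maximum is attained at $u(t)$ for a.e.\ $t$ by \eqref{eq:maxh0bis}, one freezes the control $u(t)$ and compares $g(t+\delta)$ with $p(t+\delta)\cdot f(y(t+\delta))u(t)$. Taking $\delta\to 0^{+}$ gives $g'(t)\ge p'(t)\cdot f(y(t))u(t)+p(t)\cdot(\nabla_x f)(y(t))u(t)\,y'(t)=0$ by the adjoint equation, and $\delta\to 0^{-}$ gives $g'(t)\le 0$; no case split on $\{g=0\}$ is needed, and neither the explicit maximiser nor the Lie-bracket structure is invoked.

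Your approach, by contrast, exploits the specific quadratic form of $H$: you identify the maximiser $u=h/|h|$ explicitly on $\{g>0\}$, compute $h_i'=\sum_j u_j[f_j,f_i](y)\cdot p$ via the adjoint equation, and kill $g'$ by the antisymmetry of the bracket against the symmetric weights $h_ih_j$. This is more computational and forces a separate treatment of the zero set, but it has the merit of revealing the underlying Poisson-bracket cancellation $\{H,H\}=0$ that is hidden in the paper's one-line envelope argument. The paper's proof is shorter and would survive replacing $H$ by any value function of the form $\max_{u\in U} f(x)u\cdot p$; yours is tied to the Euclidean-ball control set but makes the mechanism transparent.
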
 
\begin{proof} 
Set 
$$
[0,\tau ]\ni t\mapsto g(t):= H( y(t),  p(t)).
$$
We observe that, as $g$ is Lipschitz (this follows from the Lipschitz continuity of   $y(\cdot )$ and $p (\cdot )$, and the regularity of $H(\cdot ,\cdot )$), the proof reduces to show that 
$g'(t)=0$ for a.e. $t\in [0,\tau ]$.   
Since $ p (t )$ satisfies the adjoint equation \eqref{eq:bh0bis}, \ref{eq:maxh0bis} and   
$$
H(x,p)=\max_{u\in \bar{B}_1(0)} (f(x)u\cdot p ) , 
$$
we have that 

	\begin{eqnarray*}
			g'(t)&=& \lim_{\delta\to 0^+}\frac{ g(t+\delta)-g(t)}{\delta}
			\\
			&\geq &
			\lim_{\delta \to 0^+}\frac{p(t+\delta)\cdot f(y(t+\delta)) u(t) -\,  p(t)\cdot f(y(t))u(t) }{\delta}
			\\
			&=&\lim_{\delta \to 0^+}\frac{\,(p(t+\delta)-p(t))\cdot f( y(t+\delta))u(t) }{\delta}
			\\
			&&\qquad  +\frac{p(t)\cdot( f( y(t+\delta)) u(t)-f( y(t)) u(t)) }{\delta}
			\\
			&=&  p'(t)\cdot f( y(t))u(t) +p(t)\cdot (\nabla_xf)( y(t))  u(t)f(y(t)) u(t)
			\\
			&=& 0.
	\end{eqnarray*}
Similarly, we have that 
		\begin{eqnarray*}
			g'(t)&=& \lim_{\delta\to 0^-}\frac{ g(t+\delta)-g(t)}{\delta}
			\\
			&\leq &
			\lim_{\delta \to 0^-}\frac{p(t+\delta)\cdot f(y(t+\delta)) u(t) -\,  p(t)\cdot f(y(t))u(t) }{\delta}
			\\
			&=&\lim_{\delta \to 0^-}\frac{\,(p(t+\delta)-p(t))\cdot f( y(t+\delta))u(t) }{\delta}
			\\
			&&\qquad   +\frac{p(t)\cdot( f( y(t+\delta)) u(t)-f( y(t)) u(t)) }{\delta}
			\\
			&=&  p'(t)\cdot f( y(t))u(t) +p(t)\cdot (\nabla_xf)( y(t))  u(t)f(y(t)) u(t)
			\\
			&=& 0.
	\end{eqnarray*}
Then, we find that if $g$ is differentiable at $t$, then $g'(t)=0$. 
This completes the proof.  
\end{proof}

 \begin{theorem}\label{l:cp}
   Assume that  $f_1,\ldots ,f_m$ are locally of class $C^{1,1}$. Then, the following assertions are equivalent
   \begin{enumerate}
   \item $x\in E(\tau )$ for some $\tau >0$;
   \item there exists a singular time-optimal trajectory, $y_{x,u}$.
     \end{enumerate}
 \end{theorem}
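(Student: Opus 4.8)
The plan is to prove the two implications separately. The direction $(2)\Rightarrow(1)$ is essentially a reading of the definitions, while $(1)\Rightarrow(2)$ is the substantial one and will be obtained by approximating the limiting normal with proximal normals, applying the Maximum Principle at the approximating points, and passing to the limit by a compactness argument.

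For $(2)\Rightarrow(1)$, I would start from a singular time-optimal trajectory $y_{x,u}$ with its associated Lipschitz arc $p$, and set $\tau:=T_{\mathcal K}(x)>0$ (positive since $x\notin\mathcal K$). Because $p$ is valued in $\R^n\setminus\{0\}$, the vector $\eta:=-p(0)$ is nonzero, and the boundary condition in \eqref{eq:sot} yields $\eta\in N^L_{\mathcal R(\tau)}(x)\setminus\{0\}$. Evaluating \eqref{eq:sot22} at $t=0$ gives $H(x,p(0))=0$, hence $H(x,\eta)=H(x,-p(0))=0$ by the parity relation \eqref{eq:p-p}. Thus $x\in E(\tau)$, as required.

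For $(1)\Rightarrow(2)$, suppose $x\in E(\tau)$, so that $T_{\mathcal K}(x)=\tau$ and there is $\eta\in N^L_{\mathcal R(\tau)}(x)\cap\mathbb S^{n-1}$ with $H(x,\eta)=0$. By the very definition of the limiting normal cone I would select $x_h\to x$ and $\eta_h\in N^P_{\mathcal R(\tau)}(x_h)\cap\mathbb S^{n-1}$ with $\eta_h\to\eta$; the points $x_h$ necessarily lie on $\partial\mathcal R(\tau)$, since a nonzero proximal normal forces a boundary point. At each $x_h$, using that a time-optimal trajectory reaching $\mathcal K$ exists by the standing compactness and controllability assumptions, I would invoke the Maximum Principle in the form of Theorem \ref{t:suff0bis} to produce an extremal pair $(y_h,p_h)$ on $[0,\tau]$ solving \eqref{eq:bh0bis} with $y_h(0)=x_h$, $y_h(\tau)\in\mathcal K$, $p_h(0)=-\eta_h$, and the maximality condition \eqref{eq:maxh0bis}. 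Lemma \ref{l:hcost} then gives $H(y_h(t),p_h(t))=H(x_h,\eta_h)$ for every $t\in[0,\tau]$.

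The core of this direction, and the step I expect to be the main obstacle, is the passage to the limit $h\to\infty$. The controls $u_h$ take values in the compact ball $\bar B^m_1(0)$, so up to a subsequence $u_h\rightharpoonup u$ weakly-$*$ in $L^\infty$; the trajectories $y_h$ are equi-Lipschitz, hence (Arzel\`a--Ascoli) converge uniformly to $y=y_{x,u}$, which, since $\mathcal K$ is closed and $y(\tau)=\lim_h y_h(\tau)\in\mathcal K$ with $y(0)=x$, is time-optimal. The adjoint arcs $p_h$ solve a linear equation with $|p_h(0)|=1$, so they are equibounded and equi-Lipschitz and converge uniformly to a Lipschitz arc $p$. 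The delicate point is to verify that the limit pair $(y,p)$ still satisfies the Hamiltonian system \eqref{eq:bh0bis} for the same control $u$: this is a closedness property of extremal pairs, obtained by passing to the limit in the linear adjoint ODE, whose coefficients $(\nabla_x f)(y_h)u_h$ converge weakly because $y_h\to y$ uniformly while $u_h\rightharpoonup u$ (the state equation being affine in $u$ makes the analogous closure for $y$ routine). Once this is granted, I would conclude: $p(0)=-\eta$, so $p(0)\in -N^L_{\mathcal R(\tau)}(x)$ and $p(0)\neq 0$; $p$ never vanishes since it solves a linear equation with nonzero initial datum; and passing to the limit in Lemma \ref{l:hcost} gives $H(y(t),p(t))=\lim_h H(x_h,\eta_h)=H(x,\eta)=0$ for every $t\in[0,\tau]$. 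Hence $(y,p)$ is a singular time-optimal trajectory, which establishes $(2)$ and completes the proof.
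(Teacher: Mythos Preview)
Your proof is correct and follows essentially the same route as the paper's: Maximum Principle at proximal-normal points plus Arzel\`a--Ascoli/weak compactness to pass to the limit. The only differences are cosmetic---you treat the proximal and strictly-limiting cases uniformly by always approximating, and you obtain $H\equiv 0$ in the limit by applying Lemma~\ref{l:hcost} \emph{before} passing to the limit (so you never need to verify the maximality condition for the limiting pair), whereas the paper splits into two cases and, in the second, first establishes the maximality condition for the limit pair via an integral argument and only then invokes the lemma.
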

 \begin{proof}
The implication  $(2)\implies (1)$ follows from the definition of characteristic point and \eqref{eq:sot22}. 

Let us prove that  $(1)\implies (2)$.   
We need to distinguish two cases: 

\noindent 
(i) $H(x,\eta )=0$ for some $\eta\in N^P_{\mathcal R(\tau )}(x)\setminus \{0\}$;

\noindent 
(ii) $H(x,\eta )=0$ for some $\eta\in N^L_{\mathcal R(\tau )}(x)\setminus N^P_{\mathcal R(\tau )}(x)$, i.e. there exists $x_h\in \partial \mathcal R(\tau )\setminus\{ x\}$ and $\eta_h \in N^P_{ \mathcal R(\tau )}(x_h)\setminus \{ 0\}$ such that $x_h\to x$ and $\eta_h \to \eta \not= 0$ with $H(x,\eta )=0$.

We begin with (i). 
If $\tau >0$ and $x\in \partial \mathcal R(\tau )$ then there exists a control $u$ such that $\bar x:=y_{x,u}(\tau )\in \mathcal K$. 
Then, in view of Theorem~\ref{t:suff0bis} , there exists a Lipschitz arc $ p :[0,  \tau ]\rightarrow \R^n\setminus \{0\}$ such that
 \begin{equation}\label{eq:sot2}
			\begin{cases}
			y'(t)=f(y(t))  u(t),\hspace{1.7cm}  y (0)=x,\quad y(\tau )=\bar x,
			\\ 
		p'(t)=-((\nabla_x f)  ( y(t)) u(t))^{\star}  p(t),		\quad  p(0 )=-\eta ,  
			\end{cases}
		\end{equation}
		 for a.e.   $t\in [0,\tau ]$. Furthermore, we have that 
	$$
 p(t)\cdot f( y(t)) u(t) =H( y (t), p(t))\quad \text{ for a.e. }t\in[0,\tau].
	$$
	In order to conclude that $y_{x,u}$ is a singular time-optimal trajectory it remains to show that 

	\begin{equation}\label{eq:h0}
	H(y(t),  p(t))=0,\qquad \text{ for every  }t\in [0,\tau]. 
	\end{equation}
\eqref{eq:h0}  is a direct consequence of Equation \eqref{eq:hcost} and the fact that $H(x,\eta )=0$. 
This completes our proof in case (i). 

Now, let us consider case (ii).    We want to show that  there exists a singular time-optimal trajectory $y_{x,u}$.  Let $u_h(\cdot )$ be such that $\bar x_h:=y_{x_h,u_h}(\tau )\in \mathcal K$.  
Using once more Theorem~\ref{t:suff0bis} we find that, for every $h\in \N$, there exists a sequence of Lipschitz arcs $p_h :[0,  \tau ]\rightarrow \R^n\setminus \{0\}$ such that
\begin{equation}\label{eq:prelim}
			\begin{cases}
			y'_h(t)=f( y_h(t)) u_h(t) ,\hspace{1.7cm}  y_h(0)= x_h,\quad  y_h(\tau )=\bar x_h 
			\\ 
		 p_h'(t)=-    ((\nabla_{x} f)(y_{h}(t)) u_{h}(t))^{\star}  p_h(t),		\quad  p_h(0)=-\eta_h ,  
			\end{cases}
\end{equation}
		 for a.e.   $t\in [0,\tau ]$.   In addition, we have that
\begin{equation}\label{eq:prelimbis}
p_h(t)\cdot f( y_h(t))u_h(t) =H(y_h(t),p_h(t))\quad \text{ for a.e. }t\in[0,\tau].
\end{equation} 

Since $ p_h(0 )$ is in a compact set,   then,   by the Ascoli-Arzel\`a Theorem, 
$(y_{x_h,u_h} (\cdot ),  p_h(\cdot ))$ uniformly converges up to extracting a subsequence. We observe that the sequence of controls, $u_h:[0,\tau ]\longrightarrow \bar B^m_1(0)$ is bounded in $L^2$. Then, $u_h$ converges, weakly in $L^2$, to $ u: [0,\tau ]\longrightarrow \R^m$ up to extracting a subsequence which we still denote by $ u_h$. Now, using the Banach-Saks theorem, we deduce that there exists a subsequence of $ u_h$, $ u_{h_j}$, such that  $(u_{h_1}+\ldots + u_{h_j})/j$, strongly converges  in $L^2$ to $ u$. Then, $( u_{h_1}+\ldots + u_{h_j})/j$  converges  to $u$, for a.e. $t\in [0,\tau ]$, and we deduce that $ u:[0,\tau ]\longrightarrow \bar B^m_1(0)$.
  
Furthermore, rewriting \eqref{eq:prelim} in integral form 
\begin{equation}\label{eq:intform}
	\begin{cases}
			 y_h(t)=x_h+\int_0^t f( y_h(s)) u_h(s) \, ds,
			\\ 
		 p_h(t)=-\eta_h -\int_0^t    ((\nabla_{x} f)( y_{h}(s))u_{h}(s))^{\star}  p_h(s) \, ds,  
			\end{cases}
\end{equation}
taking the limit as $h\to \infty$ in \eqref{eq:intform}, and recalling that  $( y_h(\cdot ), p_h (\cdot ))$ uniformly converges to  $( y(\cdot ),p (\cdot ))$ while $ u_h (\cdot )$ weakly converges in $L^2$ to $ u (\cdot )$, we conclude that    
\begin{equation}\label{eq:yp}
\begin{cases}
			 y(t)= x+\int_0^t f( y(s))u(s) \, ds,
			\\ 
		 p(t)=-\eta +\int_0^t    ((\nabla_{x} f)( y (s)) u(s))^{\star}  p(s) \, ds. 
			\end{cases}
\end{equation}
We observe that, since the uniform limit of time-optimal trajectories is in turn time-optimal (see e.g. \cite{LM}),  $y(\cdot )$  is a time-optimal trajectory. 
It order to complete the proof of the lemma, it remains to show that   $ y(\cdot )$ is  singular, i.e.  
$$
p(t)\cdot f( y(t)) u(t) =H(y(t),p(t))=0\quad \text{ for every }t\in[0,\tau].
$$
Integrating \eqref{eq:prelimbis} between $0$ and $\tau$ and taking the limit as $h\to \infty$, we deduce that 
\begin{equation}\label{5}
\int_0^\tau  p(t)\cdot  f( y(t))u(t) \, dt =\int_0^\tau H( y(t),p(t))\, dt.  
\end{equation}
Since 

$$
H(y(t),p(t))-p(t)\cdot f(y(t))u(t)\geq 0, \quad \text{ or a.e. }t\in [0,\tau ],  
$$ 
\eqref{5} yields   

	$$
  p(t)\cdot f( y(t))u(t) =H( y(t), p(t))\quad \text{ for a.e. }t\in[0,\tau].
	$$
	In order to complete the proof, it remains to show that
	\begin{equation}\label{217}
	H( y(t), p(t))=0,\qquad \text{ for every  }t\in [0,\tau].
	\end{equation}
Once more, due to Equation \eqref{eq:hcost} and $H(y(0), p(0))=0$,  \eqref{217} follows. 
This completes our proof of the theorem. 
\end{proof}
\begin{remark}
We observe that $E(\tau )=\emptyset$ for every $\tau >0$ if and only if the minimum time problems admits no singular time-optimal trajectories. 
 \end{remark} 
\section{Applications}\label{s:6} 

In order to discuss some applications of our ``abstract'' regularity result, let 
us define the characteristic set associated with vector fields $f_1,\ldots ,f_m$ as 
\begin{multline*}
  \Char (f_1,\ldots ,f_m)
\\
  =\{ (x,p )\in \R^n\times (\R^n\setminus \{0\} )\, |\, f_i(x,p)=0,\, \forall i=1,\ldots ,m\}.
\end{multline*}
Let 
$$
\pi :\R^n\times (\R^n\setminus \{0\} )\longrightarrow \R^n
$$
be the projection on the base $\pi (x,p)=x$.

We recall that a manifold $\Sigma\subset \R^{2n}$ is symplectic  if the restriction of the symplectic form $\sigma =\sum_{j=1}^n dp_j\wedge dx_j$ to  $\Sigma$ is nondegenerate, or equivalently $T_\rho \Sigma \cap T_\rho \Sigma ^\sigma =\{ 0\}$, for every $\rho \in\Sigma$. (Here the exponent $\sigma$ stands for the orthogonal with respect to the symplectic form.)

We need the following 
\begin{lemma}\label{l:lemma}
  Let $\mathcal K\subset \R^n$ be a nonempty closed set, assume that vector fields, $f_1,\ldots ,f_m$,  are locally of class $C^{1,1}$, and let $V\subset 
\R^n\setminus \mathcal K$ be an open set. 
If either $\pi^{-1}(V)\cap \Char (f_1,\ldots ,f_m)=\emptyset $ or $\pi^{-1}(V)\cap \Char (f_1,\ldots ,f_m)$ is a symplectic manifold, then $E(\tau )\cap V=\emptyset$ for every $\tau >0$.     
\end{lemma}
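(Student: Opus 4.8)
The plan is to argue by contradiction: suppose $E(\tau)\cap V\neq\emptyset$ for some $\tau>0$, so there is a point $x\in V$ and a covector $\eta\in N^L_{\mathcal R(\tau)}(x)\setminus\{0\}$ with $H(x,\eta)=0$. By Theorem \ref{l:cp}, the existence of such a characteristic point is equivalent to the existence of a singular time-optimal trajectory $y_{x,u}$, i.e.\ a pair $(y(\cdot),p(\cdot))$ solving the Hamiltonian system \eqref{eq:sot} with $H(y(t),p(t))=0$ on all of $[0,T_{\mathcal K}(x)]$. The vanishing of $H$ means precisely that $f_i(y(t),p(t))=f_i(y(t))\cdot p(t)=0$ for every $i$, so the lifted curve $t\mapsto(y(t),p(t))$ lies entirely inside $\Char(f_1,\ldots,f_m)$. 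Since the trajectory starts at $y(0)=x\in V$, at least an initial portion of this curve lies in $\pi^{-1}(V)\cap\Char(f_1,\ldots,f_m)$. This immediately disposes of the first alternative: if $\pi^{-1}(V)\cap\Char(f_1,\ldots,f_m)=\emptyset$, the lifted curve cannot exist, a contradiction.

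For the second alternative I would show that a singular trajectory cannot lie in a symplectic manifold. The key observation is that the adjoint system \eqref{eq:sot} is exactly the Hamiltonian flow generated by $H$ (or, more precisely, by the individual controls entering through $f(y)u$); the curve $(y(t),p(t))$ is therefore an integral curve of a Hamiltonian vector field $X_H$ (or a measurable selection of such fields) associated with a Hamiltonian that vanishes identically along the curve. The standard fact I would invoke is that such a trajectory, being characteristic for the Hamiltonian $H$ whose zero set is $\Char(f_1,\ldots,f_m)$, has velocity vector lying in the symplectic orthogonal $(T_\rho\Sigma)^\sigma$ of the tangent space to $\Sigma=\pi^{-1}(V)\cap\Char$ at each point $\rho$ on the curve. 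On the other hand, since the curve stays inside $\Sigma$, its velocity also lies in $T_\rho\Sigma$. Hence the velocity lies in $T_\rho\Sigma\cap(T_\rho\Sigma)^\sigma$, which is $\{0\}$ by the nondegeneracy hypothesis defining a symplectic manifold. A nontrivial singular trajectory, however, has nonvanishing velocity (the lift $p(\cdot)$ never vanishes and $H$ is homogeneous, forcing $y'(t)=f(y(t))u(t)$ to behave accordingly), giving the desired contradiction.

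The main obstacle I anticipate is making rigorous the claim that the tangent to the singular trajectory lies in $(T_\rho\Sigma)^\sigma$, because the control system is only Lipschitz (the fields are $C^{1,1}$) and the control $u(\cdot)$ is merely measurable, so $X_H$ is not a classical smooth Hamiltonian vector field and the trajectory is only Lipschitz in $t$. I would handle this by working at points of differentiability: for a.e.\ $t$ the velocity $(y'(t),p'(t))$ exists and equals $(f(y)u,\,-(\nabla_x f\,u)^\star p)$, and I would verify directly that this vector is $\sigma$-orthogonal to every tangent vector of $\Sigma$ by using the defining equations $f_i(y,p)=0$ and differentiating the constraints that $\Sigma$ imposes. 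Concretely, one computes $\sigma((y',p'),(v,w))$ for $(v,w)\in T_\rho\Sigma$ and checks it vanishes, exploiting that $T_\rho\Sigma$ is cut out by the differentials $d f_i$ of the constraint functions and that the Hamiltonian flow is the characteristic direction of $H^2=\sum_i f_i^2$. The remaining care is ensuring the velocity is genuinely nonzero along a set of positive measure, which follows because the trajectory is time-optimal and hence nonstationary, together with $p(t)\neq 0$ for all $t$.
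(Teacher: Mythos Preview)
Your plan is correct and follows essentially the same route as the paper's proof: invoke Theorem~\ref{l:cp} to produce a singular time-optimal pair $(y,p)$, observe that the lifted curve $\rho=(y,p)$ lies in $\Sigma=\Char(f_1,\ldots,f_m)$, show that for a.e.\ $t$ one has $\rho'(t)\in T_{\rho(t)}\Sigma\cap (T_{\rho(t)}\Sigma)^\sigma=\{0\}$, and conclude that $y$ is constant, contradicting time-optimality. The paper makes the step $\rho'(t)\in(T_{\rho(t)}\Sigma)^\sigma$ slightly cleaner than your proposed coordinate computation: since $\rho'(t)=\sum_j u_j(t)H_{f_j}(\rho(t))$ and each $f_j$ vanishes on $\Sigma$, one has $df_j(\rho)(v)=\sigma(v,H_{f_j}(\rho))=0$ for every $v\in T_\rho\Sigma$, so $H_{f_j}(\rho)\in(T_\rho\Sigma)^\sigma$ directly.
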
 
\begin{proof}
The implication 
$$
\pi^{-1}(V)\cap \Char (f_1,\ldots ,f_m)=\emptyset \implies \forall \tau >0:\, E(\tau )\cap V=\emptyset
$$
 trivially follows from the definition of a singular time-optimal trajectory. The second implication is well known,  we give a sketch of its proof for the reader's convenience.  Denoting by $\rho (\cdot )=(y(\cdot ),p(\cdot ))$ a pair consisting of a singular time-optimal trajectory and the corresponding solution $p$ of \eqref{eq:sot}.
We recall that $\rho (\cdot )$ satisfies the following broken Hamiltonian system
\begin{equation}\label{12}
\rho '(t)=\sum_{j=1}^m u_j(t)H_{f_j}(\rho (t)), \qquad \text{ for a.e. }t\in I, 
\end{equation}
for a suitable interval $I$. (Here $H_{f_j}$ 
is the unique Hamiltonian vector field associated with the symbol $f_j(\rho )=f_j(x,p)$, i.e. $df_j (\rho )v = \sigma (v, H_{f_j} (\rho ))$, for every $v \in T_\rho \Char (f_1,\ldots ,f_m)$). We have that, whenever it exists, $\rho '(t )$ is a tangent vector to $\Char (f_1,\ldots ,f_m)$ at $\rho (t)$, i.e. 
\begin{equation}\label{13}
\rho '(t)\in T_{\rho (t)} \Char (f_1,\ldots ,f_m).
\end{equation}
We observe that 
$$
df_i(\rho)(v)=\sigma (v, H_{f_i}(\rho))=0, \qquad \forall v\in  T_{\rho} \Char (f_1,\ldots ,f_m),
$$ 
and for every $i=1,\ldots ,m$. Then, we find
$$
\Span \{ H_{f_1}(\rho ),\ldots , H_{f_m} (\rho)\}\subset T_\rho \Char (f_1,\ldots ,f_m)^\sigma .
$$
Hence, due to \eqref{12}, we have that 
$$
\rho '(t)\in T_{\rho (t)} \Char (f_1,\ldots ,f_m)^\sigma ,
$$
and, in view of \eqref{13}, we conclude that 
$$
\rho '(t)\in T_{\rho (t)} \Char (f_1,\ldots ,f_m)\cap T_{\rho (t)} \Char (f_1,\ldots ,f_m)^\sigma .
$$
On the other hand, since $\Char (f_1,\ldots ,f_m)$ is a symplectic manifold, we find that $\rho '(t)=0$ for a.e. $t$ in a suitable connected interval $I$ and $I\ni t\mapsto \rho (t)=(y(t),p(t))$ is a constant curve (in contrast with the assumption that $y(\cdot )$ is a time-optimal trajectory).  
  \end{proof} 

As a direct consequence of Theorem \ref{t:at} and Lemma \ref{l:lemma}, we have the following 
\begin{theorem}\label{t:app1}
  Let $\mathcal K\subset \R^n$ be a nonempty closed set,
assume that the vector fields, $f_1,\ldots ,f_m$,  are  of class $C^{\infty}$ and satisfy H\"ormander condition \eqref{a:hor}.
  Suppose that $\Char (f_1,\ldots ,f_m)$ is a symplectic manifold.
  Then, $\sing\supp_{Lip}\, T_{\mathcal K}=\emptyset$. 
\end{theorem}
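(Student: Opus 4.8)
The plan is to deduce Theorem \ref{t:app1} directly from Lemma \ref{l:lemma} and Theorem \ref{t:at}, the point being that the \emph{global} symplectic hypothesis on $\Char(f_1,\ldots,f_m)$ allows one to take the negligible set $N$ in \eqref{eq:loccp} to be empty at \emph{every} point of $\R^n\setminus \mathcal K$. First I would apply Lemma \ref{l:lemma} with the largest admissible open set $V=\R^n\setminus \mathcal K$. To do so I must check that $\pi^{-1}(V)\cap \Char(f_1,\ldots,f_m)$ falls under one of the two alternatives of the lemma. Since $\pi^{-1}(V)$ is open in $\R^n\times(\R^n\setminus\{0\})$, the set $\pi^{-1}(V)\cap \Char(f_1,\ldots,f_m)$ is an \emph{open} subset of the manifold $\Char(f_1,\ldots,f_m)$; an open subset of a symplectic manifold is again symplectic, because the tangent spaces are unchanged and the restriction of $\sigma$ therefore stays nondegenerate. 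Should this intersection happen to be empty, the first alternative of the lemma applies instead. In either case Lemma \ref{l:lemma} yields $E(\tau)\cap V=\emptyset$ for every $\tau>0$.

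Next, since by definition $E(\tau)\subset \R^n\setminus \mathcal K=V$, the previous step gives $E(\tau)=\emptyset$ for all $\tau>0$. Fixing an arbitrary $x_0\in \R^n\setminus \mathcal K$, I then have $B_\epsilon(x_0)\cap E(\tau)=\emptyset$ for every $\epsilon>0$ and every $\tau>0$, so hypothesis \eqref{eq:loccp} of Theorem \ref{t:at} holds with the choice $N=\emptyset$ (which is closed and of measure zero). Theorem \ref{t:at}, in the form recorded in Remark \ref{r:ponte}(i), then tells me that $T_{\mathcal K}$ is Lipschitz continuous on $B_\epsilon(x_0)$. As $x_0$ ranges over $\R^n\setminus \mathcal K$, this shows that $T_{\mathcal K}$ is locally Lipschitz at each point of its domain, that is, $\sing\supp_{Lip} T_{\mathcal K}=\emptyset$.

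Since all the substantive content — the Hamiltonian/symplectic argument of Lemma \ref{l:lemma} forbidding nonconstant singular time-optimal trajectories, and the reduction of Lipschitz regularity to the local Petrov condition in Theorem \ref{t:at} — has already been carried out, no serious obstacle remains at this stage. The only points demanding care are the purely formal ones indicated above: that restricting the symplectic form to an open submanifold preserves nondegeneracy, the harmless edge case of an empty characteristic intersection, and the inclusion $E(\tau)\subset \R^n\setminus \mathcal K$ that upgrades the conclusion $E(\tau)\cap V=\emptyset$ to $E(\tau)=\emptyset$. Thus Theorem \ref{t:app1} is genuinely a corollary, and the proof amounts to verifying that the hypotheses of the two cited results are met with $N$ taken empty.
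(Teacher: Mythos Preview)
Your proposal is correct and follows exactly the route the paper indicates: the paper states Theorem~\ref{t:app1} as ``a direct consequence of Theorem~\ref{t:at} and Lemma~\ref{l:lemma}'' without spelling out the details, and you have supplied precisely those details, including the observation (implicit in Remark~\ref{r:ponte}(i)) that one may take $N=\emptyset$.
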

 
We observe that the result is known if target $\mathcal K$ is the complement of a bounded open domain $\Omega\subset \R^n$ with smooth boundary, that is, $T_{\mathcal K}$  is the so called exit time from $\Omega$ (see \cite{ACS1}).

\subsection{The analysis of the real analytic-case in low dimension}  
In this section, we make the following assumption 
\vspace{0,5cm} 

\noindent {\bf (H)} vector fields $f_1,\ldots , f_m$ are real-analytic and satisfy H\"or\-man\-der's bracket generating condition, \eqref{a:hor}. 

\vspace{0,5cm}
 
We begin with the case of dimension $2$. We need a preliminary result the proof of which can be found in  \cite[Theorem 1.1 (i)]{AM} (see also \cite[Chapter 3]{AB}).
\begin{lemma}\label{l:d2} 
Let $n=2$, assume {\bf (H)}, and let $\mathcal K$ be a nonempty closed set. Then, for every $x\in \R^2\setminus \mathcal K$ there exist an open neighborhood of $x$, $W$, and $w_1,\ldots ,w_\ell \in W$ so that for every $y\in W\cap \pi (\Char (f_1,\ldots ,f_m))\setminus  \{ w_1,\ldots ,w_\ell \}$ there exists $\epsilon >0$ such that 
\begin{equation} 
\Char (f_1,\ldots ,f_m)\cap \pi^{-1}(B_\epsilon (y) ) \ \text{ is a symplectic manifold.}
\end{equation} 
\end{lemma}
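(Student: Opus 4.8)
The plan is to show that $S:=\pi(\Char(f_1,\ldots,f_m))$ is a real-analytic curve off which $\Char$ is a symplectic surface, and that the exceptional locus is finite. First I would identify $S$: a point $x$ lies in $S$ precisely when some nonzero covector annihilates every $f_i(x)$, i.e. when $\rank\{f_1(x),\ldots,f_m(x)\}\le 1$. Since \eqref{a:hor} forbids all the $f_i$ from vanishing simultaneously at a point (otherwise all iterated brackets would vanish there, forcing $Lie_{\{f_1,\ldots,f_m\}}(x)=\{0\}$), we have $\rank\ge 1$ everywhere, so $S$ is the common zero set of the analytic functions $x\mapsto\det(f_i(x),f_j(x))$. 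The same bracket bookkeeping shows $S$ has empty interior: on an open set where $f_i=\lambda_i f_1$ with $f_1\neq 0$, one checks $[f_i,f_j]=(\lambda_i f_1(\lambda_j)-\lambda_j f_1(\lambda_i))f_1$, and inductively every iterated bracket is a multiple of $f_1$, contradicting \eqref{a:hor}. Hence $S$ is a proper real-analytic subvariety of dimension $\le 1$, and by the structure theory of one-dimensional analytic sets it is, inside any relatively compact $W$, a finite union of analytic arcs together with finitely many singular or isolated points.

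Next I would describe $\Char$ over a smooth arc of $S$ and read off symplecticity. Near such a point some $f_{i_0}=:f_1$ is nonvanishing; for $x\in S$ the fiber $\pi^{-1}(x)\cap\Char$ is $(\R\setminus\{0\})\cdot\nu(x)$ with $\nu(x):=f_1(x)^\perp$ (analytic and nonzero), while for $x\notin S$ it is empty. Parametrizing the arc by an analytic immersion $s\mapsto x(s)$, the relevant piece of $\Char$ is the image of $(s,t)\mapsto(x(s),t\,\nu(x(s)))$, $t\neq 0$, a two-dimensional analytic submanifold with tangent space spanned by $\partial_s=(x'(s),t\,D\nu(x(s))x'(s))$ and $\partial_t=(0,\nu(x(s)))$. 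A direct computation with $\sigma=\sum_j dp_j\wedge dx_j$ gives $\sigma(\partial_s,\partial_t)=-x'(s)\cdot\nu(x(s))=-\det(f_1(x(s)),x'(s))$, so on this two-plane $\sigma$ is nondegenerate exactly when $x'(s)\not\parallel f_1(x(s))$; that is, $\Char$ is symplectic over $x(s)$ iff $S$ is transverse to the field direction there.

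The heart of the argument, and the main obstacle, is to exclude that a whole arc of $S$ be tangent to the field, since such an arc would yield a one-parameter family of non-symplectic base points and destroy finiteness. Suppose $\gamma\subset S$ were everywhere tangent to $f_1$; after reparametrizing so that $\gamma'=f_1(\gamma)$ we may write $f_i(\gamma(s))=\lambda_i(s)f_1(\gamma(s))$ with $\lambda_i$ analytic. I would then establish the stability identity: for any field $g$ with $g(\gamma(s))=\mu(s)f_1(\gamma(s))$ and any index $i$,
\[
[f_i,g](\gamma(s))=\bigl(\lambda_i(s)\mu'(s)-\mu(s)\lambda_i'(s)\bigr)\,f_1(\gamma(s)),
\]
the key point being that the two occurrences of the transverse vector $Df_1(\gamma)f_1(\gamma)$ cancel. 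Thus the class of fields parallel to $f_1$ along $\gamma$ is stable under bracketing with every $f_i$; since the $f_i$ themselves belong to it, so does every iterated bracket, whence $Lie_{\{f_1,\ldots,f_m\}}(\gamma(s))\subseteq\spa\{f_1(\gamma(s))\}$, contradicting \eqref{a:hor}. Consequently $s\mapsto\det(f_1(x(s)),x'(s))$ is a nonzero analytic function on each arc, so its zeros are isolated and finite in $W$.

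Finally I would assemble the conclusion. Taking $W$ a bounded neighborhood of the given $x$, let $w_1,\ldots,w_\ell$ be the finitely many singular and isolated points of $S\cap\overline{W}$ together with the finitely many tangency points on the arcs. For $y\in(W\cap S)\setminus\{w_1,\ldots,w_\ell\}$ one may choose $\epsilon>0$ so small that $B_\epsilon(y)$ meets $S$ in a single smooth analytic arc, everywhere transverse to the field and with $f_1\neq 0$; then $\Char\cap\pi^{-1}(B_\epsilon(y))$ is exactly the surface described above, on which $\sigma$ is nondegenerate, hence a symplectic manifold. I note that real-analyticity enters in only two places, namely to stratify $S$ into finitely many arcs and points and to guarantee that the tangency function has isolated zeros on each arc, both of which may fail in the merely smooth category.
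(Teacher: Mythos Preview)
The paper does not prove this lemma; it simply refers to \cite[Theorem~1.1~(i)]{AM} (and \cite[Chapter~3]{AB}). Your argument is therefore a self-contained substitute rather than a variant of the paper's own reasoning, and it is correct. The identification of $S=\pi(\Char)$ with the rank-drop locus, the empty-interior argument via \eqref{a:hor}, the local structure of a one-dimensional real-analytic set, the parametrization of $\Char$ over a smooth arc of $S$ as $(s,t)\mapsto(x(s),t\,\nu(x(s)))$, and the computation $\sigma(\partial_s,\partial_t)=-\nu(x(s))\cdot x'(s)$ are all sound. Your stability identity ruling out arcs of $S$ tangent to the distribution is valid --- the two occurrences of $Df_1(\gamma)f_1(\gamma)$ do cancel --- and in fact the same cancellation goes through for $[g_1,g_2]$ with \emph{both} $g_i$ parallel to $f_1$ along $\gamma$, so closure under bracketing needs no appeal to left-nested brackets. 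One cosmetic point: note explicitly that the reparametrization $\gamma'=f_1(\gamma)$ is available because an analytic immersion tangent to a nonvanishing field has nowhere-vanishing tangential coefficient.

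What your direct approach buys over a bare citation is that it makes transparent exactly where real-analyticity enters (finite stratification of $S$ into arcs and points; isolated zeros of $s\mapsto\det(f_1(x(s)),x'(s))$ on each arc) and why $n=2$ is special (the fibre of $\Char$ over a point of $S$ is a punctured line, so $\Char$ is a surface and symplecticity collapses to a single scalar condition). The reference \cite{AM} handles $n=2$ and $n=3$ in a common microlocal framework, which is also what supplies Lemma~\ref{l:3d}; your elementary method would need genuine modification for $n=3$, where the fibre is two-dimensional and nondegeneracy of $\sigma$ becomes a rank condition on a four-plane.
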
 
\begin{theorem}\label{t:rac2}
Let $n=2$, assume {\bf (H)}, and let $\mathcal K$ be a nonempty closed set. Then 
$T_{\mathcal K}$ is locally Lipschitz continuous on $\R^2\setminus \mathcal K$. 
\end{theorem}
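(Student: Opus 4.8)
The plan is to combine the abstract regularity result from Section~\ref{s:4} (Theorem~\ref{t:at}) with the structural information about the characteristic set provided by Lemma~\ref{l:d2} and Lemma~\ref{l:lemma}. The overall strategy is to verify, for each point $x_0\in\R^2\setminus\mathcal K$, the hypothesis \eqref{eq:loccp} of Theorem~\ref{t:at}; in dimension $2$ we expect to take $N=\emptyset$, so that part~(i) of Remark~\ref{r:ponte} immediately yields local Lipschitz continuity near $x_0$. Since $x_0$ is arbitrary, this gives $\sing\supp_{Lip}T_{\mathcal K}=\emptyset$, i.e.\ local Lipschitz continuity on all of $\R^2\setminus\mathcal K$.

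First I would fix $x_0\in\R^2\setminus\mathcal K$ and apply Lemma~\ref{l:d2} to obtain an open neighborhood $W$ of $x_0$ and finitely many points $w_1,\dots,w_\ell\in W$ such that, away from these points, $\Char(f_1,\dots,f_m)$ is locally a symplectic manifold over $W$. By Lemma~\ref{l:lemma}, on any open $V\subset\R^2\setminus\mathcal K$ over which $\pi^{-1}(V)\cap\Char$ is either empty or a symplectic manifold, we have $E(\tau)\cap V=\emptyset$ for every $\tau>0$. The difficulty is that the finitely many exceptional base points $w_1,\dots,w_\ell$ are genuinely excluded from the symplectic conclusion of Lemma~\ref{l:d2}, so one cannot directly assert $E(\tau)\cap W=\emptyset$. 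The main obstacle, therefore, is to handle these exceptional points and to show that they do not spoil the desired regularity.

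To overcome this I would distinguish two cases according to whether $x_0$ is one of the exceptional points. If $x_0\notin\{w_1,\dots,w_\ell\}$, then after shrinking $W$ to a smaller ball $B_\epsilon(x_0)$ avoiding the finite set and avoiding $\mathcal K$, Lemma~\ref{l:d2} guarantees that over $B_\epsilon(x_0)$ the characteristic set is symplectic (or empty), whence Lemma~\ref{l:lemma} gives $E(\tau)\cap B_\epsilon(x_0)=\emptyset$ for all $\tau>0$; this is exactly \eqref{eq:loccp} with $N=\emptyset$. The genuinely delicate case is $x_0\in\{w_1,\dots,w_\ell\}$, say $x_0=w_k$: here I would argue that a single exceptional point is harmless because the condition \eqref{eq:loccp} of Theorem~\ref{t:at} is tested through the reachable-set boundaries and ultimately through singular trajectories (Theorem~\ref{l:cp}). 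The key point I expect to need is that, in dimension $n=2$, an isolated exceptional point cannot be the projection of a singular time-optimal trajectory, since by Theorem~\ref{l:cp} and the broken Hamiltonian flow \eqref{12} such a trajectory $\rho(\cdot)=(y(\cdot),p(\cdot))$ lives in $\Char(f_1,\dots,f_m)$ and must move along it; but the base projection of this trajectory would have to pass through the isolated point $w_k$ while remaining in the characteristic set, and the one-dimensionality of the problem leaves no room for a nonconstant such trajectory confined to an isolated point. Hence $E(\tau)$ contains no point whose image is $w_k$, and again \eqref{eq:loccp} holds with $N=\emptyset$ in a full neighborhood.

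In both cases I obtain $\epsilon>0$ with $B_\epsilon(x_0)\cap E(\tau)=\emptyset$ for all $\tau>0$, so Theorem~\ref{t:at} together with Remark~\ref{r:ponte}(i) gives local Lipschitz continuity of $T_{\mathcal K}$ on $B_\epsilon(x_0)$. As $x_0\in\R^2\setminus\mathcal K$ was arbitrary, $T_{\mathcal K}$ is locally Lipschitz continuous on $\R^2\setminus\mathcal K$, completing the proof. The crux of the whole argument is the low-dimensional rigidity step: showing that the finitely many non-symplectic base points cannot carry singular trajectories, which is precisely where the hypothesis $n=2$ and the real-analytic structure built into Lemma~\ref{l:d2} are essential.
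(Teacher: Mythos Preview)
Your proof is correct and follows essentially the same approach as the paper: both reduce to Theorem~\ref{t:at} with $N=\emptyset$ by showing, via Lemma~\ref{l:d2} and Lemma~\ref{l:lemma}, that no singular time-optimal trajectory can exist in a neighborhood of $x_0$. The paper's argument is slightly more streamlined in that it avoids your case split: it observes directly that any singular time-optimal trajectory $y(\cdot)$ taking values in $W$ must satisfy $y(t)\in\{w_1,\dots,w_\ell\}$ for all $t$ in the relevant interval (since at any other point Lemma~\ref{l:lemma} applies locally), and a continuous curve with values in a finite set is constant, contradicting time-optimality---this handles both of your cases at once.
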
  
\begin{proof}
The result is a consequence of Theorem \ref{t:at}. We only need to show that $E(\tau )=\emptyset$, for every $\tau >0$. 
Let $x\in \R^2\setminus \mathcal K$ and let $W$ be the  open neighborhood of $x$ given by Lemma \ref{l:d2}. 
The idea of the proof consists of showing that, due to the fact that we are working in dimension $2$, the support of a singular time-optimal trajectory is, at most, a discrete subset of $W$, i.e., singular time-optimal trajectories do not exist.

Let  $y(t)\in W$ be a singular time-optimal trajectory, for every $t\in ]a,b[$ (for some $a<b$).
Hence, in view of Lemma \ref{l:lemma}, we find that 
 $y(t)\in \{   w_{i}\ | \ , i=1,\ldots ,\ell \}$ for every $t\in ]a,b[$ (with $w_{i}$ as in Lemma \ref{l:d2}).  
It follows that $E(\tau )\cap W=\emptyset$, for every $\tau >0$, and this completes our proof. 
\end{proof}
\begin{remark}
We point out that the above Lipschitz regularity result is strictly related to the driftless structure of control system \eqref{eq:f}. For example, for the double integrator $\ddot{ x}=u\in [-1,1]$, with point target $(0,0)$ it is well-known that the minimum time function fails to be Lipschitz continuous (it is H\"older continuous of exponent $1/2$). 
\end{remark} 
In the case of three variables, we need the following preliminary result (see \cite[Theorem 1.1 (ii)]{AM}). 
\begin{lemma}\label{l:3d} 
Let $n=3$, assume {\bf (H)} and let $\mathcal K$ be a nonempty closed set. Then, for every $x\in \R^3\setminus \mathcal K$ there exist an open neighborhood of $x$, $W$, and a real-analytic set of dimension $2$, $\mathcal{A}\subset W$, so that 
\begin{eqnarray*} 
& &\Char (f_1,\ldots ,f_m)\cap \pi^{-1}(W\setminus \mathcal{A} ) \ \text{either is a symplectic manifold,} 
\\
& &\text{or it is the empty set.}
\end{eqnarray*} 
\end{lemma}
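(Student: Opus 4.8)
The plan is to follow the scheme of Lemma \ref{l:lemma} and its two-dimensional counterpart Lemma \ref{l:d2}, reducing the symplectic question to a pointwise computation with the symbols and then invoking real-analyticity to control the size of the degenerate locus. First I would record the basic identity for the Poisson bracket of two symbols: a direct computation using $f_i(x,p)=f_i(x)\cdot p$ gives $\{f_i,f_j\}(x,p)=[f_i,f_j](x)\cdot p$, so that Poisson brackets of symbols are the symbols of commutators. Setting $r(x):=\dim \Span\{f_1(x),\ldots,f_m(x)\}$, the fibre of $\Char(f_1,\ldots,f_m)$ over $x$ is the annihilator of $\Span\{f_i(x)\}$ with the origin removed; hence it is empty when $r(x)=3$, a punctured line when $r(x)=2$, and a punctured plane when $r(x)\le1$.

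Next I would treat the rank-two regime, which carries the essential content. Near a point where $r=2$, choose a local basis $f_1,f_2$ of $\Span\{f_i\}$; then $\Char\cap\pi^{-1}(\cdot)=\{f_1(x,p)=f_2(x,p)=0,\ p\ne0\}$ is cut out by two functions whose $dp$-parts are linearly independent, so it is a smooth $4$-manifold. Exactly as in the proof of Lemma \ref{l:lemma}, its symplectic orthogonal is spanned by the Hamiltonian fields $H_{f_1},H_{f_2}$, and the manifold is symplectic at $(x,p)$ precisely when $\{f_1,f_2\}(x,p)=[f_1,f_2](x)\cdot p\ne0$. Since on the fibre $p$ is a nonzero normal to $\Span\{f_1,f_2\}(x)$, this is equivalent to $[f_1,f_2](x)\notin\Span\{f_1,f_2\}(x)$, i.e. to the step-two bracket-generating condition at $x$. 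Thus the bad points of the rank-two stratum form the Martinet-type set $\mathcal M:=\{x:\ r(x)=2,\ \Span\{f_i(x)\}+\Span\{[f_i,f_j](x)\}\ne\R^3\}$.

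The set $\mathcal A$ is then assembled and its dimension estimated by real-analyticity. Shrinking to a connected ball $W$, the analytic set $\{r\le2\}$ either has empty interior or coincides with $W$. In the first case $\{r\le2\}$ has dimension at most $2$; I would take $\mathcal A=\{r\le2\}$, and $\Char$ is empty over $W\setminus\mathcal A=\{r=3\}$. In the second case $r\le2$ on all of $W$: here I would show that $\{r\le1\}$ and $\mathcal M$ are proper analytic subsets of $W$, hence of dimension at most $2$, and set $\mathcal A=\{r\le1\}\cup\mathcal M$; over $W\setminus\mathcal A$ one has $r=2$ off $\mathcal M$, so $\Char$ is a symplectic manifold by the previous step. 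The properness of $\{r\le1\}$ and $\mathcal M$ is where \eqref{a:hor} enters: if $r\le1$ on an open set then all $f_i$ are proportional to a single field and $Lie_{\{f_1,\ldots,f_m\}}$ would have dimension $\le1$, while if $\mathcal M$ had interior then every $[f_i,f_j]$ would lie in the rank-two distribution $\Span\{f_i\}$, which would then be involutive and, by Frobenius, integrable; in either case $Lie_{\{f_1,\ldots,f_m\}}(x)\ne\R^3$, contradicting H\"ormander's condition.

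I expect the main obstacle to be precisely this dimension control of the degenerate strata: one must rule out that the Martinet locus $\mathcal M$ or the lower-rank locus $\{r\le1\}$ is three-dimensional, and one must also guarantee that $\Char$ is a genuine manifold over $W\setminus\mathcal A$ rather than merely symplectic fibrewise. Both points rest on real-analyticity, through the dichotomy that a proper analytic subset of a connected open set has empty interior, so that no frontier between the rank strata survives in $W\setminus\mathcal A$, and that analytic sets carry a well-defined dimension, combined with the Frobenius/involutivity obstruction to \eqref{a:hor} described above.
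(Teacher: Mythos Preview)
The paper does not supply its own proof of this lemma: it simply records the statement and refers the reader to \cite[Theorem~1.1~(ii)]{AM}. Your outline is a correct self-contained argument and is precisely the microlocal scheme that underlies the cited result. The key steps---the identity $\{f_i,f_j\}(x,p)=[f_i,f_j](x)\cdot p$, the reduction of symplecticity of a codimension-two $\Char$ to the nonvanishing of this Poisson bracket on the characteristic fibre, and the use of H\"ormander's condition together with Frobenius to exclude interior for the degenerate strata---are exactly what is needed, and your stratification by the pointwise rank $r(x)$ is the natural way to organise them.

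One minor technical cleanup: as you define it, the Martinet set $\mathcal M$ carries the open constraint $r(x)=2$ and is therefore not itself analytic. It is simpler to take
\[
\mathcal A \;=\; \{x\in W:\ r(x)\le 1\}\ \cup\ \bigl\{x\in W:\ \Span\{f_i(x),\,[f_j,f_k](x)\}\ne\R^3\bigr\},
\]
both pieces being zero-sets of minors of real-analytic matrices, hence genuine analytic subsets of $W$. Your Frobenius/involutivity dichotomy then applies verbatim to show that neither piece can have interior in the connected ball $W$ under~\eqref{a:hor}, so each has dimension at most~$2$; and over $W\setminus\mathcal A$ the rank is identically $2$ (in the second case of your dichotomy), which is what guarantees that $\Char\cap\pi^{-1}(W\setminus\mathcal A)$ is a smooth $4$-manifold and not merely a set with symplectic fibres.
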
 
As a consequence of the above result we have 
\begin{theorem}\label{t:rac3}
Let $n=3$, assume {\bf (H)} and let $\mathcal K$ be a nonempty closed set. Then 
$\sing\supp_{Lip}\, T_{\mathcal K}$ is a set of measure zero.
\end{theorem}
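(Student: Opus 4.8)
The plan is to reduce Theorem \ref{t:rac3} to the abstract regularity result of Theorem \ref{t:at}. By Remark \ref{r:ponte}(ii), it suffices to show that the hypothesis \eqref{eq:loccp} of Theorem \ref{t:at} holds at every point $x_0\in \R^3\setminus \mathcal K$: namely, to exhibit $\epsilon>0$ and a closed set $N$ of measure zero such that $B_\epsilon(x_0)\cap E(\tau)\subset N$ for all $\tau>0$. Once this is established locally at every point, the compact exhaustion argument of Remark \ref{r:ponte}(ii) upgrades it to the global statement that $\sing\supp_{Lip}\, T_{\mathcal K}$ has measure zero.

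First I would fix $x_0\in\R^3\setminus\mathcal K$ and apply Lemma \ref{l:3d} to obtain an open neighborhood $W$ of $x_0$ and a real-analytic set $\mathcal A\subset W$ of dimension $2$ such that $\Char(f_1,\ldots,f_m)\cap\pi^{-1}(W\setminus\mathcal A)$ is either a symplectic manifold or empty. The natural candidate for the exceptional set is $N:=\mathcal A$ (or its closure intersected with a slightly smaller ball). Shrinking to a ball $B_\epsilon(x_0)\subset W$, I would argue that on the open set $V:=B_\epsilon(x_0)\setminus\mathcal A$ the hypotheses of Lemma \ref{l:lemma} are met: indeed $\pi^{-1}(V)\cap\Char(f_1,\ldots,f_m)$ is contained in $\Char\cap\pi^{-1}(W\setminus\mathcal A)$, which by Lemma \ref{l:3d} is either empty or a symplectic manifold. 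Hence Lemma \ref{l:lemma} yields $E(\tau)\cap V=\emptyset$ for every $\tau>0$, which is exactly $B_\epsilon(x_0)\cap E(\tau)\subset\mathcal A$ for all $\tau>0$. This gives \eqref{eq:loccp} with $N=\mathcal A$.

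The two facts that make $N=\mathcal A$ admissible are that it is closed and of measure zero. Closedness I would arrange by replacing $\mathcal A$ with its closure inside $B_\epsilon(x_0)$, noting that a real-analytic set is already closed in $W$, so intersecting with a compactly contained ball and passing to closure keeps it a real-analytic (hence closed) set of dimension at most $2$. Measure zero is immediate: a real-analytic set of dimension $2$ in $\R^3$ has three-dimensional Lebesgue measure zero, since away from its singular locus it is a $2$-dimensional submanifold and the singular locus is itself a lower-dimensional analytic set. With \eqref{eq:loccp} verified, Theorem \ref{t:at} gives $\sing\supp_{Lip}\, T_{\mathcal K}\cap B_\epsilon(x_0)\subset\mathcal A$, hence is negligible in that ball.

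The main obstacle I anticipate is not the symplectic/characteristic-set machinery, which is packaged cleanly in Lemmas \ref{l:3d} and \ref{l:lemma}, but rather the bookkeeping of turning a local statement into a global measure-zero conclusion while preserving the closed-set requirement. Concretely, the set $N$ produced at each point is only local (it depends on the neighborhood $W$ from Lemma \ref{l:3d}), so passing to the global $\sing\supp_{Lip}\, T_{\mathcal K}$ requires covering $\R^3\setminus\mathcal K$ by countably many such balls and taking the union of the corresponding exceptional sets, each of measure zero; the countable union is then of measure zero, as noted in Remark \ref{r:ponte}(ii). One should check that this union is precisely $\sing\supp_{Lip}\, T_{\mathcal K}$ (or contains it), which follows because outside every local $\mathcal A$ the minimum time function is locally Lipschitz by Theorem \ref{t:at}. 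The only delicate point is ensuring the dimension-$2$ bound on $\mathcal A$ really forces Lebesgue-negligibility in $\R^3$, which is where the real-analyticity in assumption \textbf{(H)} is essential and distinguishes this case from the higher-dimensional open problem.
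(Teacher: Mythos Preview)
Your proposal is correct and follows essentially the same route as the paper's proof: apply Lemma~\ref{l:3d} to get the analytic set $\mathcal A$, invoke Lemma~\ref{l:lemma} on the open complement to force $E(\tau)\subset\mathcal A$ locally, then feed this into Theorem~\ref{t:at} and globalize via Remark~\ref{r:ponte}(ii). The only cosmetic difference is that the paper phrases the key step in terms of singular time-optimal trajectories being trapped in $\mathcal A$ (which is how Lemma~\ref{l:lemma} is proved), whereas you invoke Lemma~\ref{l:lemma} directly; one small point worth making explicit is that $\pi^{-1}(V)\cap\Char$ is an \emph{open} subset of the symplectic manifold from Lemma~\ref{l:3d}, hence itself symplectic, so the hypothesis of Lemma~\ref{l:lemma} is genuinely satisfied rather than merely ``contained in'' something satisfying it.
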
 
\begin{remark}\label{r:opt}
We observe that, without additional assumptions, it may happen that $\sing\supp_{Lip}\, T_{\mathcal K}\not=\emptyset$. Indeed, in \cite[page 3326]{ACS1}, it is given an open bounded set with smooth boundary $\Omega\subset \R^3$ such that taking
  $$
\mathcal{K}=\R^3\setminus \Omega ,\quad  f_1=\partial_{x_1},\text{ and } f_2=(1-x_1)\partial_{x_2} +x_1^2\partial_{x_3},  
  $$
$\mathcal T_{\mathcal K}$ is not locally Lipschitz continuous.  
\end{remark} 
\begin{proof}
Once more, the idea of the proof consists of applying Theorem \ref{t:at} to the present case. 
For this purpose, let $x\in \R^3\setminus \mathcal K$. We want to show that one can find a neighborhood of $x$, $W$, such that 
$$
E(\tau )\cap W \subset N 
$$
for a suitable closed set $N$ of measure zero and for every $\tau >0$. 
Let $W$ be the set given by Lemma \ref{l:3d}.  
Using once more Lemma \ref{l:lemma} and Lemma \ref{l:3d}, we deduce that, if $y(\cdot )$ is a singular time-optimal trajectory and $y(t)\in W$, for every $t$ in a suitable interval $]a,b[$, then   $y(t)\in \mathcal A $, for every $t\in ]a,b[$. Hence,  for every $\tau>0$, 
$$ 
E(\tau )\cap W\subset \mathcal A . 
$$ 
Then, possibly reducing $W$, we conclude that there exists $N$, a closed set of measure zero, such that 
$$
E(\tau )\cap W\subset N,
$$
for every $\tau >0$. 
 This completes our proof. 
  \end{proof}

\section*{Declarations}
\begin{itemize}
\item {\bf Conflict of interest:} On behalf of all authors, the corresponding author states that there is no conflict of interest. 
\item{\bf Funding:} This work was partly supported by the National Group for Mathematical Analysis, Probability and Applications (GNAMPA) of the Italian Istituto Nazionale di Alta Matematica ``Francesco Severi''; moreover, the third author acknowledges support by the Excellence Department Project awarded to the Department of Mathematics, University of Rome Tor Vergata, CUP E83C18000100006. 
\end{itemize}

\end{document}